\documentclass[12pt,reqno]{article}

\usepackage[usenames]{color}
\usepackage{amssymb}
\usepackage{graphicx}
\usepackage{amscd}

\usepackage{amsthm}
\newtheorem{theorem}{Theorem}

\newtheorem{proposition}[theorem]{Proposition}
\newtheorem{corollary}[theorem]{Corollary}
\newtheorem{conjecture}[theorem]{Conjecture}

\theoremstyle{definition}

\newtheorem{example}[theorem]{Example}

\usepackage[colorlinks=true,
linkcolor=webgreen, filecolor=webbrown,
citecolor=webgreen]{hyperref}

\definecolor{webgreen}{rgb}{0,.5,0}
\definecolor{webbrown}{rgb}{.6,0,0}

\usepackage{color}
\usepackage{float}
\usepackage{graphics,amsmath,amssymb}
\usepackage{amsfonts}
\usepackage{latexsym}
\usepackage{epsf}

\DeclareMathOperator{\Rev}{Rev}
\DeclareMathOperator{\rev}{rev}

\setlength{\textwidth}{6.5in} \setlength{\oddsidemargin}{.1in}
\setlength{\evensidemargin}{.1in} \setlength{\topmargin}{-.5in}
\setlength{\textheight}{8.9in}

\newcommand{\seqnum}[1]{\href{http://oeis.org/#1}{\underline{#1}}}

\usepackage{tikz}
\newcounter{x}
\newcounter{y}
\newcounter{z}

\newcommand\xaxis{210}
\newcommand\yaxis{-30}
\newcommand\zaxis{90}

\newcommand\topside[3]{
  \fill[fill=yellow, draw=black,shift={(\xaxis:#1)},shift={(\yaxis:#2)},
  shift={(\zaxis:#3)}] (0,0) -- (30:1) -- (0,1) --(150:1)--(0,0);
}

\newcommand\leftside[3]{
  \fill[fill=red, draw=black,shift={(\xaxis:#1)},shift={(\yaxis:#2)},
  shift={(\zaxis:#3)}] (0,0) -- (0,-1) -- (210:1) --(150:1)--(0,0);
}

\newcommand\rightside[3]{
  \fill[fill=blue, draw=black,shift={(\xaxis:#1)},shift={(\yaxis:#2)},
  shift={(\zaxis:#3)}] (0,0) -- (30:1) -- (-30:1) --(0,-1)--(0,0);
}

\newcommand\cube[3]{
  \topside{#1}{#2}{#3} \leftside{#1}{#2}{#3} \rightside{#1}{#2}{#3}
}

\newcommand\planepartition[1]{
 \setcounter{x}{-1}
  \foreach \a in {#1} {
    \addtocounter{x}{1}
    \setcounter{y}{-1}
    \foreach \b in \a {
      \addtocounter{y}{1}
      \setcounter{z}{-1}
      \foreach \c in {1,...,\b} {
        \addtocounter{z}{1}
        \cube{\value{x}}{\value{y}}{\value{z}}
      }
    }
  }
}
\begin{document}

\begin{center}
\vskip 1cm{\LARGE\bf Centered polygon numbers, heptagons and nonagons, and the Robbins numbers} \vskip 1cm \large
Paul Barry\\
School of Science\\
Waterford Institute of Technology\\
Ireland\\
\href{mailto:pbarry@wit.ie}{\tt pbarry@wit.ie}
\end{center}
\vskip .2 in

\begin{abstract} In this note, we explore certain determinantal descriptions of the Robbins numbers. Techniques used for this include continued fractions, Riordan arrays and series inversion. Proven and conjectured representations involve the determinants of both Hankel and symmetric matrices. In specific cases, links are drawn to centered polygonal numbers, and to heptagons and nonagons. We conjecture a Hankel transform determinant for the Robbins numbers related to the Fibonacci and the Catalan numbers.\end{abstract}

\section{Introduction} The Robbins numbers $A_n$ \seqnum{A005130}, which begin
$$1,1, 2, 7,42, 429, 7436, \cdots$$ can be defined by
$$A_n=\prod_{k=0}^{n-1} \frac{(3k+1)!}{(n+k)!}.$$ They count $n \times n$ alternating sign matrices. An alternating sign matrix (ASM) is a matrix with entries drawn from the set $\{-1,0,1\}$,
such that $1$’s and $-1$'s
alternate in each column and each row (when a $-1$ occurs), and such that the first and last non-zero entry in each row and column is $1$. They extend the partially ordered set of permutation matrices into a lattice. For instance, the $7$ $3 \times 3$ alternating sign matrices are as follows.
$$\left(
\begin{array}{ccc}
 1 & 0 & 0 \\
 0 & 1 & 0 \\
 0 & 0 & 1 \\
\end{array}
\right), \left(
\begin{array}{ccc}
 1 & 0 & 0 \\
 0 & 0 & 1 \\
 0 & 1 & 0 \\
\end{array}
\right),\left(
\begin{array}{ccc}
 0 & 1 & 0 \\
 1 & 0 & 0 \\
 0 & 0 & 1 \\
\end{array}
\right),\left(
\begin{array}{ccc}
 0 & 1 & 0 \\
 1 & -1 & 1 \\
 0 & 1 & 0 \\
\end{array}
\right),$$
$$\left(
\begin{array}{ccc}
 0 & 1 & 0 \\
 0 & 0 & 1 \\
 1 & 0 & 0 \\
\end{array}
\right), \left(
\begin{array}{ccc}
 0 & 0 & 1 \\
 1 & 0 & 0 \\
 0 & 1 & 0 \\
\end{array}
\right), \left(
\begin{array}{ccc}
 0 & 0 & 1 \\
 0 & 1 & 0 \\
 1 & 0 & 0 \\
\end{array}
\right).$$
Every permutation matrix is an alternating sign matrix, and thus there are $A_n-n!$  (OEIS sequence \seqnum{A321511}) alternating sign matrices which have at least one $-1$ among their elements.

A plane partition is a two-dimensional array of nonnegative integers $\pi_{i,j}$  (with positive integer indices $i$ and $j$) that is non-increasing in both indices.

In this note we explore links between the Robbins numbers, Hankel transforms, principal minor sequences of symmetric matrices, and Riordan arrays. The plan of the paper is as follows.

\begin{enumerate}
\item This Introduction
\item Transforms, Riordan arrays, continued fractions, and Hankel transforms
\item Centered polygon numbers
\item Heptagons and nonagons
\item A special matrix
\item The sequence $1,-2,-7,42,429,\ldots$ as a Hankel transform
\item The sequence $1,1,2,6,33,286,\ldots$ as a Hankel transform
\item The sequence $1,3,26,646,\ldots$ as a Hankel transform
\item The sequence $1,2,11,170,7429,\ldots$ as a Hankel transform
\item Fibonacci numbers, Catalan numbers, and the Robbins numbers
\item Riordan arrays and the Robbins numbers
\item Conclusion
\item Acknowledgements.
\end{enumerate}

Underlying the simple formula
  $$A_n=\prod_{k=0}^{n-1} \frac{(3k+1)!}{(n+k)!}$$ is a rich history of research, that linked the enumeration of classes of alternating sign matrices to that of certain plane partitions, via an excursion into mathematical physics in the guise of the six-vertex model \cite{Hone, Ilse, Izergin, Korepin_book}. Indeed, from this field, we have the following simple determinantal expression \cite{BDZ} for the Robbins numbers.

We consider the (semi-infinite) matrix $M$ given by
$$(\binom{n+k}{k})- (\text{If}[k=n+1, 1,0]),$$ which begins
$$\left(
\begin{array}{ccccccc}
 1 & 1 & 1 & 1 & 1 & 1 & 1 \\
 1 & 2 & 3 & 4 & 5 & 6 & 7 \\
 1 & 3 & 6 & 10 & 15 & 21 & 28 \\
 1 & 4 & 10 & 20 & 35 & 56 & 84 \\
 1 & 5 & 15 & 35 & 70 & 126 & 210 \\
 1 & 6 & 21 & 56 & 126 & 252 & 462 \\
 1 & 7 & 28 & 84 & 210 & 462 & 924 \\
\end{array}
\right)-\left(
\begin{array}{ccccccc}
 0 & 1 & 0 & 0 & 0 & 0 & 0 \\
 0 & 0 & 1 & 0 & 0 & 0 & 0 \\
 0 & 0 & 0 & 1 & 0 & 0 & 0 \\
 0 & 0 & 0 & 0 & 1 & 0 & 0 \\
 0 & 0 & 0 & 0 & 0 & 1 & 0 \\
 0 & 0 & 0 & 0 & 0 & 0 & 1 \\
 0 & 0 & 0 & 0 & 0 & 0 & 0 \\
\end{array}
\right)$$  or
$$\left(
\begin{array}{ccccccc}
 1 & 0 & 1 & 1 & 1 & 1 & 1 \\
 1 & 2 & 2 & 4 & 5 & 6 & 7 \\
 1 & 3 & 6 & 9 & 15 & 21 & 28 \\
 1 & 4 & 10 & 20 & 34 & 56 & 84 \\
 1 & 5 & 15 & 35 & 70 & 125 & 210 \\
 1 & 6 & 21 & 56 & 126 & 252 & 461 \\
 1 & 7 & 28 & 84 & 210 & 462 & 924 \\
\end{array}
\right).$$
The generating function of this matrix is given by
$$f(x,y)=\frac{1}{1-x-y}-\frac{y}{1-xy}.$$
The sequence of principal minors $|M|_{0\le i,j \le n}$ yields the Robbins numbers $A_{n+1}$
$$1, 2, 7,42, 429, 7436, \cdots.$$
We now multiply the matrix $M$ on the right by the Riordan array $\left(\frac{1}{(1-x)\left(\frac{1}{1-x}-x\right)},x\right)$ to obtain the symmetric matrix that begins
$$\left(
\begin{array}{ccccccc}
 1 & 1 & 1 & 1 & 1 & 1 & 1 \\
 1 & 3 & 4 & 5 & 6 & 7 & 8 \\
 1 & 4 & 9 & 14 & 20 & 27 & 35 \\
 1 & 5 & 14 & 29 & 49 & 76 & 111 \\
 1 & 6 & 20 & 49 & 99 & 175 & 286 \\
 1 & 7 & 27 & 76 & 175 & 351 & 637 \\
 1 & 8 & 35 & 111 & 286 & 637 & 1275 \\
\end{array}
\right).$$
Multiplying on the right is the same as the following application of the fundamental theorem of Riordan arrays (in $y$):
$$\left(\frac{1}{(1-y)\left(\frac{1}{1-y}-y\right)},y\right)\cdot f(x,y)=\frac{1}{(1-y)\left(\frac{1}{1-y}-y\right)}f(x,y)=\frac{1}{(1-xy)(1-x-y)}.$$ The form of this generating function shows that the new matrix is symmetrical.
As the Riordan array used is lower triangular with $1$'s on the diagonal, we see that the principal minors of the new symmetric matrix $\tilde{M}$ once again yield the Robbins numbers $A_{n+1}$. We now consider the (reversed) embedded matrix
$$\left(
\begin{array}{ccccccc}
 1 & 0 & 0 & 0 & 0 & 0 & 0 \\
 3 & 1 & 0 & 0 & 0 & 0 & 0 \\
 9 & 4 & 1 & 0 & 0 & 0 & 0 \\
 29 & 14 & 5 & 1 & 0 & 0 & 0 \\
 99 & 49 & 20 & 6 & 1 & 0 & 0 \\
 351 & 175 & 76 & 27 & 7 & 1 & 0 \\
 1275 & 637 & 286 & 111 & 35 & 8 & 1 \\
\end{array}
\right).$$

This is in fact a Riordan matrix, namely
\begin{align*}(1-3x+3x^2-2x^3, x(1-x))^{-1}&=\left(\frac{1}{1-3xc(x)+3x^2c(x)^2-2x^3c(x)^3}, xc(x)\right)\\&=\left(\frac{1}{(1-x)\sqrt{1-4x}}, xc(x)\right),\end{align*}
where $c(x)=\frac{1-\sqrt{1-4x}}{2x}$ is the generating function of the Catalan numbers $C_n=\frac{1}{2n+1}\binom{2n}{n}$. Denoting this Riordan array by $R$, we then have that
$$\tilde{M}=\text{reversal}(R)+(\text{reversal}(R)-\text{diag}R_0)^T,$$
$$\tilde{M}_{n,k}=\begin{cases} R_{n,n-k}\quad \text{if\,}k \le n,\\
                        R_{k,k-n} \quad \text{otherwise},
           \end{cases}$$ where
$R_0$ is the initial column of $R$, and the reversal of $R$ is given by the lower-triangular part of $\tilde{M}$ in this case. We shall call this process of going from a Riordan array to a symmetric matrix the \emph{Riordan symmetrization process}. (More generally, the process can be applied to any lower-triangular matrix to produce a symmetric matrix).
We thus have arrived at the following proposition.
\begin{proposition} The Robbins numbers $A_{n+1}$ are given by the principal minors of the matrix obtained by the Riordan symmetrization process starting with the Riordan array
$$\left(\frac{1}{(1-x)\sqrt{1-4x}}, xc(x)\right).$$
\end{proposition}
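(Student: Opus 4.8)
The plan is to isolate what genuinely needs proof. Granting from the setup that $\tilde M$ is symmetric (its generating function $\frac{1}{(1-xy)(1-x-y)}$ is symmetric in $x$ and $y$) and that the principal minors $|\tilde M|_{0\le i,j\le n}$ are the Robbins numbers $A_{n+1}$, the proposition reduces to the claim that the Riordan symmetrization process applied to $R=\left(\frac{1}{(1-x)\sqrt{1-4x}},xc(x)\right)$ reproduces $\tilde M$ exactly. Since that process, by definition, builds the symmetric matrix $S$ with $S_{n,k}=R_{n,n-k}$ for $k\le n$ and $S_{n,k}=S_{k,n}$ otherwise, it is enough to verify the single lower-triangular identity
$$R_{n,n-k}=\tilde M_{n,k}\qquad(0\le k\le n).$$
Indeed, once $S$ and $\tilde M$ agree on and below the diagonal, the upper parts agree too: for $k>n$ we get $S_{n,k}=S_{k,n}=\tilde M_{k,n}=\tilde M_{n,k}$, using the identity (with $n<k$) and the symmetry of $\tilde M$. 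Then $S=\tilde M$, and the principal-minor statement transfers verbatim.

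Read along anti-diagonals, this identity says that the $m$-th column of $R$ is the $m$-th subdiagonal of $\tilde M$: writing $m=n-k$, I must show $R_{m+j,m}=\tilde M_{m+j,j}$ for all $j\ge 0$. As $R=(g,f)$ with $g(x)=\frac{1}{(1-x)\sqrt{1-4x}}$ and $f(x)=xc(x)$, its $m$-th column has generating function $g(x)f(x)^m=x^m g(x)c(x)^m$, so the claim is equivalent to the diagonal identity $\sum_{j\ge 0}\tilde M_{m+j,j}\,x^{\,j}=g(x)\,c(x)^m$. I would prove this by extracting the offset diagonals of $F(x,y)=\frac{1}{(1-xy)(1-x-y)}$: forming
$$F(t,x/t)=\sum_{m}\left(\sum_{j}\tilde M_{m+j,j}\,x^{\,j}\right)t^{m}=\frac{1}{1-x}\cdot\frac{-t}{(t-\alpha)(t-\beta)},$$
where $\alpha=\frac{1-\sqrt{1-4x}}{2}=xc(x)$ and $\beta=\frac{1+\sqrt{1-4x}}{2}$ are the roots of $t^2-t+x$. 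A partial-fraction expansion—chosen so that all coefficients remain genuine power series in $x$, which forces the small root $\alpha$ to supply only the negative powers of $t$ and the large root $\beta$ the nonnegative ones—gives the coefficient of $t^m$ as $\frac{1}{(1-x)\sqrt{1-4x}}\,\beta^{-m}$. Finally $\alpha+\beta=1$ yields $\beta=1-xc(x)$, and the Catalan relation $c=1+xc^2$ (equivalently $c(1-xc)=1$) gives $\beta^{-1}=c(x)$, so the coefficient of $t^m$ is exactly $g(x)c(x)^m$, as needed.

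The incidental checks are short manipulations with $c(x)$: that $f(x)=xc(x)$ is the compositional inverse of $x(1-x)$, and that the explicit first component agrees with $(1-3x+3x^2-2x^3,\,x(1-x))^{-1}$, i.e. $g_0(xc(x))=(1-x)\sqrt{1-4x}$ for $g_0(x)=1-3x+3x^2-2x^3$. The step I expect to be the main obstacle is the diagonal extraction itself: the substitution $y\mapsto x/t$ turns $F$ into a rational function of $t$ that must be expanded as a formal Laurent series consistent with the original double power series, so one cannot simply sum a geometric series but must argue which of the two roots governs the nonnegative powers of $t$. Once that branch choice is justified, the reversal of $R$ is identified with the lower-triangular part of $\tilde M$, and the proposition follows from the already-established evaluation of the principal minors of $\tilde M$.
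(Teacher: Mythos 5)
Your proposal is correct, and its overall skeleton is the one the paper uses: accept from the preceding construction that $\tilde M$ (generating function $\frac{1}{(1-xy)(1-x-y)}$) is symmetric with principal minors $A_{n+1}$, and then show that the Riordan symmetrization of $R=\left(\frac{1}{(1-x)\sqrt{1-4x}},\,xc(x)\right)$ reproduces $\tilde M$. The difference is in how that last identification is handled. The paper simply asserts it: it displays the reversed lower-triangular part of $\tilde M$, states ``this is in fact a Riordan matrix, namely $(1-3x+3x^2-2x^3,\,x(1-x))^{-1}$,'' and computes that inverse to be $R$; no argument is given that the displayed pattern persists. You supply the missing proof: the identity $R_{m+j,m}=\tilde M_{m+j,j}$ is equivalent to the diagonal generating-function identity $\sum_{j\ge 0}\tilde M_{m+j,j}x^j=g(x)c(x)^m$, which you extract from $F(t,x/t)=\frac{1}{1-x}\cdot\frac{-t}{(t-\alpha)(t-\beta)}$ by partial fractions, with the correct observation that in the ring where the substitution $y\mapsto x/t$ makes sense the small root $\alpha=xc(x)$ must be expanded in negative powers of $t$ and the large root $\beta=1-xc(x)$ in nonnegative powers, and then $\beta^{-1}=c(x)$ finishes the computation. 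I verified the computation: the coefficient of $t^m$, $m\ge 0$, is $\frac{\beta^{-m}}{(1-x)\sqrt{1-4x}}=g(x)c(x)^m$ as claimed. So your route buys a rigorous justification of precisely the step the paper leaves as an empirical matrix identification, at the cost of a slightly more technical Laurent-series argument; the paper's presentation, conversely, is quicker but is really an observation plus a Riordan-array inversion rather than a proof. (Your ``incidental checks'' involving $(1-3x+3x^2-2x^3,\,x(1-x))^{-1}$ are not actually needed in your version, since you land directly on the pair $\left(\frac{1}{(1-x)\sqrt{1-4x}},\,xc(x)\right)$ named in the statement.)
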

The matrix defined by $\left(\frac{1}{(1-x)\sqrt{1-4x}}, xc(x)\right)$ has general $(n,k)$-th element
$$\sum_{j=0}^n \binom{2j-k}{j-k}.$$
This gives us the following result.
\begin{corollary}
We have
$$A_{n+1}= \left| \sum_{j=0}^n \binom{2j-n+k}{j-n+k} \right|_{0 \le i,j \le n}.$$
\end{corollary}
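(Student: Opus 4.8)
The plan is to reduce the Corollary to the Proposition by making the entries of the symmetrized matrix fully explicit. The Proposition already asserts that $A_{n+1}$ is the $(n+1)\times(n+1)$ principal minor of the symmetric matrix $\tilde{M}$ produced by the Riordan symmetrization process from $R=\left(\frac{1}{(1-x)\sqrt{1-4x}}, xc(x)\right)$. Hence the only genuinely new content is to justify the closed form $R_{n,k}=\sum_{j=0}^n\binom{2j-k}{j-k}$ for the entries of $R$, and then to push this through the reversal-and-transpose rule recorded just before the Proposition.

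First I would factor $R$ through the column-summing (partial-sum) Riordan array. Set $B=\left(\frac{1}{\sqrt{1-4x}}, xc(x)\right)$. Since $\left(\frac{1}{1-x},x\right)$ acts on the left as the operator replacing each column by its sequence of partial sums, and the Riordan product gives $\left(\frac{1}{1-x},x\right)\cdot B=\left(\frac{1}{(1-x)\sqrt{1-4x}}, xc(x)\right)=R$ (the composition is trivial because the second component of the left factor is $x$), we get $R_{n,k}=\sum_{j=0}^n B_{j,k}$. It therefore suffices to prove the entry identity $B_{j,k}=\binom{2j-k}{j-k}$.

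This last identity is the substantive step. By definition $B_{j,k}=[x^j]\frac{1}{\sqrt{1-4x}}(xc(x))^k=[x^{j-k}]\frac{c(x)^k}{\sqrt{1-4x}}$, so with $m=j-k$ the claim becomes the generating-function identity $\frac{c(x)^k}{\sqrt{1-4x}}=\sum_{m\ge 0}\binom{2m+k}{m}x^m$. I would establish this by Lagrange inversion in B\"urmann form. Writing $u=xc(x)$, the functional equation $c=1+xc^2$ yields $x=u(1-u)$, i.e.\ $u=x\Phi(u)$ with $\Phi(u)=\frac{1}{1-u}$; moreover $c=\frac{1}{1-u}$ and $\sqrt{1-4x}=1-2u$, while $1-x\Phi'(u)=\frac{1-2u}{1-u}$. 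Consequently $\frac{c(x)^k}{\sqrt{1-4x}}=\frac{(1-u)^{-k-1}}{1-x\Phi'(u)}$, and the B\"urmann formula $[x^m]\frac{F(u)}{1-x\Phi'(u)}=[u^m]F(u)\Phi(u)^m$ with $F(u)=(1-u)^{-k-1}$ gives $[x^m]\frac{c(x)^k}{\sqrt{1-4x}}=[u^m](1-u)^{-(m+k+1)}=\binom{2m+k}{m}$, as required. (One could instead convolve $c(x)^k=\sum_m\frac{k}{2m+k}\binom{2m+k}{m}x^m$ with $\frac{1}{\sqrt{1-4x}}=\sum_i\binom{2i}{i}x^i$, but that forces a Vandermonde-type summation that the inversion route sidesteps.) This is where I expect the only real work to lie.

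Finally I would assemble the determinant. Substituting $R_{n,k}=\sum_{j=0}^n\binom{2j-k}{j-k}$ into the symmetrization rule $\tilde{M}_{n,k}=R_{n,n-k}$ for $k\le n$ (and $\tilde{M}_{n,k}=R_{k,k-n}$ otherwise) gives, in the lower-triangular part, $\tilde{M}_{n,k}=\sum_{j=0}^n\binom{2j-n+k}{j-n+k}$, which is precisely the entry displayed in the Corollary, with symmetry fixing the remaining entries. Since the Proposition identifies the $(n+1)\times(n+1)$ principal minor of $\tilde{M}$ with $A_{n+1}$, the stated determinantal formula follows. Beyond keeping the reversal $k\mapsto n-k$ and the index ranges straight, this bookkeeping is routine.
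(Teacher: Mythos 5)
Your proof is correct and takes essentially the same route as the paper: the Corollary is deduced from the Proposition by substituting the entry formula $R_{n,k}=\sum_{j=0}^n\binom{2j-k}{j-k}$ for the Riordan array $\left(\frac{1}{(1-x)\sqrt{1-4x}}, xc(x)\right)$ into the reversal rule $\tilde{M}_{n,k}=R_{n,n-k}$ of the symmetrization process. The only difference is that the paper states the entry formula without proof, whereas you supply a correct derivation of it (factoring off the partial-sum array $\left(\frac{1}{1-x},x\right)$ and proving $\frac{c(x)^k}{\sqrt{1-4x}}=\sum_{m\ge 0}\binom{2m+k}{m}x^m$ by Lagrange--B\"urmann inversion), which fills in a detail the paper leaves implicit.
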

We now remark that the Riordan array $\left(\frac{1}{(1-x)\sqrt{1-4x}}, xc(x)\right)$ is the vertical half \cite{Vertical} of the Riordan array  $\left(\frac{1}{(1-x)(1-x+x^2)},\frac{x}{1-x}\right)$. Thus we have the following result.
\begin{corollary} The Robbins numbers $A_{n+1}$ are given by the principal minors of the matrix obtained by the Riordan symmetrization process applied to the vertical half of the Riordan array $\left(\frac{1}{(1-x)(1-x+x^2)},\frac{x}{1-x}\right)$.
\end{corollary}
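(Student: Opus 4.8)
The plan is to obtain the statement as an immediate consequence of the Proposition, since the only new ingredient is the identification of the array to which the Riordan symmetrization process is applied. The Proposition already asserts that the principal minors of the symmetrization of $\left(\frac{1}{(1-x)\sqrt{1-4x}}, xc(x)\right)$ are the Robbins numbers $A_{n+1}$; the symmetrization process is a function of the array alone, so it suffices to confirm that this array is exactly the vertical half of $\left(\frac{1}{(1-x)(1-x+x^2)},\frac{x}{1-x}\right)$, as asserted in the remark preceding the statement. Once the two descriptions of the array agree, the conclusion about the minors transfers verbatim.

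The real content is therefore to verify the vertical-half identity, which I would do by unwinding the definition from \cite{Vertical}. For a Riordan array of the form $\left(g(x),\frac{x}{1-x}\right)$, writing $f(x)=x\phi(x)$ with $\phi(x)=\frac{1}{1-x}$, the vertical half $(\hat g,\hat f)$ is assembled from the central coefficients $R_{2k,k}$, so that $\hat f=w(x)$ is the solution with $w(0)=0$ of $w=x\phi(w)=\frac{x}{1-w}$, while $\hat g(x)=\dfrac{g(w)}{1-x\phi'(w)}$ by Lagrange inversion (the diagonal formula for central coefficients). First I would solve $w=\frac{x}{1-w}$, i.e. $w-w^2=x$, obtaining $w=\frac{1-\sqrt{1-4x}}{2}=xc(x)$; this already matches the required second component $\hat f=xc(x)$ and, pleasingly, depends only on $f$, consistent with both the Pascal case and the present one sharing the same $f$.

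Next I would record the elementary consequences $1-2w=\sqrt{1-4x}$, $1-w=\frac{1+\sqrt{1-4x}}{2}$, and $w^2=w-x$, the last of which yields the clean identity $1-w+w^2=1-x$. Substituting $g(x)=\frac{1}{(1-x)(1-x+x^2)}$ then gives $g(w)=\frac{1}{(1-w)(1-x)}$, while $\phi'(w)=\frac{1}{(1-w)^2}$ gives $1-x\phi'(w)=\frac{1-2w}{1-w}=\frac{2\sqrt{1-4x}}{1+\sqrt{1-4x}}$. Combining these, the factors of $1-w$ cancel and I obtain $\hat g(x)=\frac{1}{(1-x)\sqrt{1-4x}}$, exactly the first component. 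As a sanity check, the identical computation with $g=\frac{1}{1-x}$ returns $\frac{1}{\sqrt{1-4x}}$, the classical vertical half of Pascal's triangle. With $(\hat g,\hat f)=\left(\frac{1}{(1-x)\sqrt{1-4x}},xc(x)\right)$ confirmed, the Proposition applies to this array and the Corollary follows.

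I expect the main obstacle to be purely bookkeeping: correctly invoking the diagonal formula for $\hat g$ and fixing the branch of the square root, together with the algebraic simplifications around the implicit function $w$. None of this is deep once the substitution $w=xc(x)$ is in hand, since the relation $w^2=w-x$ keeps every intermediate expression rational in $x$ and $\sqrt{1-4x}$.
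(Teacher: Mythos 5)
Your proposal is correct and takes essentially the same route as the paper: the corollary is the Proposition combined with the identification of $\left(\frac{1}{(1-x)\sqrt{1-4x}},\, xc(x)\right)$ as the vertical half of $\left(\frac{1}{(1-x)(1-x+x^2)},\frac{x}{1-x}\right)$, which the paper simply asserts as a remark with a citation. Your Lagrange-inversion verification of that identification is valid, and your formula $\hat g(x)=g(w)/\left(1-x\psi'(w)\right)$ with $w=x\psi(w)$ is equivalent, by implicit differentiation, to the vertical-half formula $V=\left(\frac{x\phi'(x)\,g(\phi(x))}{\phi(x)},\,\phi(x)\right)$, $\phi(x)=\Rev\left(\frac{x^2}{f(x)}\right)$, that the paper records in Section 2.
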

\begin{example} \textbf{Diagonal sums.} The diagonal sums of the matrix with generating function $\frac{1}{(1-xy)(1-x-y)}$ will have generating function $\frac{1}{(1-x^2)(1-2x)}$. This corresponds to the sequence \seqnum{A000975}, which begins $$1, 2, 5, 10, 21, 42, 85, 170, 341, 682, 1365,\ldots.$$
This is the convolution of the all $1$'s sequence and the Jacobsthal numbers \seqnum{A001045}. The Jacobsthal numbers play a distinguished role in the story of the Robbins numbers:  $A_n$ is odd if and only if $n$ is a Jacobsthal number \cite{Frey}. They also occur in other contexts related to the Robbins numbers \cite{Jac}.
\end{example}
These results indicate that it is worthwhile exploring more links between the construction of the Robbins numbers and Riordan arrays. The next example reinforces this.
\begin{example} We consider the Riordan array $\left(\frac{1}{1+x+x^2}, \frac{x}{1+x}\right)$ which begins
$$\left(
\begin{array}{ccccccc}
 1 & 0 & 0 & 0 & 0 & 0 & 0 \\
 -1 & 1 & 0 & 0 & 0 & 0 & 0 \\
 0 & -2 & 1 & 0 & 0 & 0 & 0 \\
 1 & 2 & -3 & 1 & 0 & 0 & 0 \\
 -1 & -1 & 5 & -4 & 1 & 0 & 0 \\
 0 & 0 & -6 & 9 & -5 & 1 & 0 \\
 1 & 0 & 6 & -15 & 14 & -6 & 1 \\
\end{array}
\right).$$
The bivariate generating function of this matrix is given by
$$ f(x,y)=\frac{\frac{1}{1+x+x^2}}{1-y\frac{x}{1+x}}=\frac{1+x}{(1+x+x^2)(1+x-xy)}.$$
The symmetrization of this matrix begins
$$\left(
\begin{array}{ccccccc}
 1 & 1 & 1 & 1 & 1 & 1 & 1 \\
 1 & -1 & -2 & -3 & -4 & -5 & -6 \\
 1 & -2 & 0 & 2 & 5 & 9 & 14 \\
 1 & -3 & 2 & 1 & -1 & -6 & -15 \\
 1 & -4 & 5 & -1 & -1 & 0 & 6 \\
 1 & -5 & 9 & -6 & 0 & 0 & 0 \\
 1 & -6 & 14 & -15 & 6 & 0 & 1 \\
\end{array}
\right).$$
The principal minors of this matrix then begin
$$1, -2, -7, 42, 429, -7436, -218348,\ldots.$$
The generating function of the symmetrization is given by
$$f\left(xy, \frac{1}{x}\right)+f\left(xy, \frac{1}{y}\right)-\frac{1}{1+xy+x^2y^2}=\frac{1}{(1-x+xy)(1-y+xy)}.$$
We can also consider the symmetric matrix with generating function
$$\frac{1}{(1+x+xy)(1+y+xy)}.$$
This matrix begins
$$\left(
\begin{array}{ccccccc}
 1 & -1 & 1 & -1 & 1 & -1 & 1 \\
 -1 & -1 & 2 & -3 & 4 & -5 & 6 \\
 1 & 2 & 0 & -2 & 5 & -9 & 14 \\
 -1 & -3 & -2 & 1 & 1 & -6 & 15 \\
 1 & 4 & 5 & 1 & -1 & 0 & 6 \\
 -1 & -5 & -9 & -6 & 0 & 0 & 0 \\
 1 & 6 & 14 & 15 & 6 & 0 & 1 \\
\end{array}
\right).$$ Again, its principal minors yield the sequence
$$1, -2, -7, 42, 429, -7436, -218348,\ldots.$$
In both cases, if we multiply the $(n,k)$-th element of the matrix by $(-1)^k$ and take the principal minors, then we get the unsigned sequence
$$1, 2, 7, 42, 429, 7436, 218348,\ldots.$$
We can also consider the matrix with generating function
$$\frac{1}{(1-ix+xy)(1-iy+xy)},$$ which expands to give the complex matrix that begins
$$\left(
\begin{array}{ccccccc}
 1 & i & -1 & -i & 1 & i & -1 \\
 i & 1 & 2 i & -3 & -4 i & 5 & 6 i \\
 -1 & 2 i & 0 & 2 i & -5 & -9 i & 14 \\
 -i & -3 & 2 i & -1 & i & -6 & -15 i \\
 1 & -4 i & -5 & i & -1 & 0 & -6 \\
 i & 5 & -9 i & -6 & 0 & 0 & 0 \\
 -1 & 6 i & 14 & -15 i & -6 & 0 & 1 \\
\end{array}
\right).$$
The principal minor sequence of this matrix will then be
$$1, 2, 7, 42, 429, 7436, 218348,\ldots.$$
\end{example}
\begin{example} $\mathbf{2}-$\textbf{factorial numbers}. The generating function $\frac{1}{(1-x-y-xy)(1-x-y)}$ has an interesting application. Using the variant $1+\frac{xy(1-x)(1-y)}{(1-x-y-xy)(1-x-y)}$ we obtain a matrix that begins
$$\left(
\begin{array}{ccccccc}
 1 & 0 & 0 & 0 & 0 & 0 & 0 \\
 0 & 1 & 1 & 1 & 1 & 1 & 1 \\
 0 & 1 & 4 & 7 & 10 & 13 & 16 \\
 0 & 1 & 7 & 20 & 40 & 67 & 101 \\
 0 & 1 & 10 & 40 & 106 & 223 & 406 \\
 0 & 1 & 13 & 67 & 223 & 572 & 1236 \\
 0 & 1 & 16 & 101 & 406 & 1236 & 3114 \\
\end{array}
\right).$$
The principal minor sequence of this matrix begins
$$1, 1, 3, 21, 315, 9765, 615195,\ldots$$ which we conjecture coincides with the $2$-factorial numbers \seqnum{A005329}. These are given by $\prod_{i=1}^n 2^{i-1}$. They enumerate upper triangular $n \times n$ $(0,1)$-matrices with no zero rows. Multiplying on the left and right respectively by the binomial matrix $\left(\binom{n}{k}\right)$ \seqnum{A007318} and its transpose, respectively, we obtain the matrix that begins
$$\left(
\begin{array}{ccccccc}
 1 & 1 & 1 & 1 & 1 & 1 & 1 \\
 1 & 2 & 4 & 8 & 16 & 32 & 64 \\
 1 & 4 & 13 & 37 & 97 & 241 & 577 \\
 1 & 8 & 37 & 132 & 410 & 1170 & 3154 \\
 1 & 16 & 97 & 410 & 1451 & 4619 & 13699 \\
 1 & 32 & 241 & 1170 & 4619 & 16138 & 51960 \\
 1 & 64 & 577 & 3154 & 13699 & 51960 & 179969 \\
\end{array}
\right).$$ By construction, this will have the same principal minor sequence.
The original generating function  expands to give the matrix that begins
$$\left(
\begin{array}{ccccccc}
 1 & 2 & 3 & 4 & 5 & 6 & 7 \\
 2 & 7 & 15 & 26 & 40 & 57 & 77 \\
 3 & 15 & 43 & 94 & 175 & 293 & 455 \\
 4 & 26 & 94 & 251 & 555 & 1079 & 1911 \\
 5 & 40 & 175 & 555 & 1431 & 3191 & 6391 \\
 6 & 57 & 293 & 1079 & 3191 & 8065 & 18109 \\
 7 & 77 & 455 & 1911 & 6391 & 18109 & 45207 \\
\end{array}
\right),$$ whose principal minor sequence begins $$1, 3, 21, 315, 9765, 615195,\ldots.$$
The diagonal sums of this matrix will have generating function
$\frac{1}{(1-2x)(1-2x-x^2)}$ which gives the convolution of $2^n$ and the Pell numbers. This is \seqnum{A094706}.
\end{example}
\begin{example} We can recover the Robbins numbers $A_n$ in determinantal form as follows. Instead of beginning with the matrix $\left(\binom{n+k}{k}\right)=\left(\binom{n}{k}\right)\left(\binom{n}{k}\right)^T$, we form the matrix $$\left(\binom{n-1}{n-k}\right)\left(\binom{n-1}{n-k}\right)^T$$ to obtain the matrix with generating function $1+\frac{xy}{1-x-y}$. We then form the matrix that begins
$$\left(
\begin{array}{ccccccc}
 1 & 0 & 0 & 0 & 0 & 0 & 0 \\
 0 & 1 & 1 & 1 & 1 & 1 & 1 \\
 0 & 1 & 2 & 3 & 4 & 5 & 6 \\
 0 & 1 & 3 & 6 & 10 & 15 & 21 \\
 0 & 1 & 4 & 10 & 20 & 35 & 56 \\
 0 & 1 & 5 & 15 & 35 & 70 & 126 \\
 0 & 1 & 6 & 21 & 56 & 126 & 252 \\
\end{array}
\right)-\left(
\begin{array}{ccccccc}
 0 & 0 & 0 & 0 & 0 & 0 & 0 \\
 0 & 0 & 1 & 0 & 0 & 0 & 0 \\
 0 & 0 & 0 & 1 & 0 & 0 & 0 \\
 0 & 0 & 0 & 0 & 1 & 0 & 0 \\
 0 & 0 & 0 & 0 & 0 & 1 & 0 \\
 0 & 0 & 0 & 0 & 0 & 0 & 1 \\
 0 & 0 & 0 & 0 & 0 & 0 & 0 \\
\end{array}
\right),$$  or
$$\left(
\begin{array}{ccccccc}
 1 & 0 & 0 & 0 & 0 & 0 & 0 \\
 0 & 1 & 0 & 1 & 1 & 1 & 1 \\
 0 & 1 & 2 & 2 & 4 & 5 & 6 \\
 0 & 1 & 3 & 6 & 9 & 15 & 21 \\
 0 & 1 & 4 & 10 & 20 & 34 & 56 \\
 0 & 1 & 5 & 15 & 35 & 70 & 125 \\
 0 & 1 & 6 & 21 & 56 & 126 & 252 \\
\end{array}
\right).$$
This matrix has generating function
$$1+\frac{xy}{1-x-y}-\left(\frac{y}{1-xy}-y\right)=\frac{1-y-x(1-y)(1+y+y^2)+x^2y}{(1-xy)(1-x-y)}.$$
The principal minor sequence of this matrix then yields the Robbins numbers $A_n$.
\end{example}

\begin{example}\label{Ex} It is possible to use a Riordan array and the symmetrization process to arrive at a matrix whose principal minor sequence begins $1,1,2,7,42,\ldots$. For this, we start with the Riordan array
$\left(\frac{1-x+x^2}{1-x}, \frac{x}{1-x}\right)$ (essentially \seqnum{A072405}) which begins
$$\left(
\begin{array}{ccccccc}
 1 & 0 & 0 & 0 & 0 & 0 & 0 \\
 0 & 1 & 0 & 0 & 0 & 0 & 0 \\
 1 & 1 & 1 & 0 & 0 & 0 & 0 \\
 1 & 2 & 2 & 1 & 0 & 0 & 0 \\
 1 & 3 & 4 & 3 & 1 & 0 & 0 \\
 1 & 4 & 7 & 7 & 4 & 1 & 0 \\
 1 & 5 & 11 & 14 & 11 & 5 & 1 \\
\end{array}
\right).$$
The inverse of this is the Riordan array $\left(\frac{1+x}{1+x+x^2}, \frac{x}{1+x}\right)$ \seqnum{A106509}, which begins
$$\left(
\begin{array}{ccccccc}
 1 & 0 & 0 & 0 & 0 & 0 & 0 \\
 0 & 1 & 0 & 0 & 0 & 0 & 0 \\
 -1 & -1 & 1 & 0 & 0 & 0 & 0 \\
 1 & 0 & -2 & 1 & 0 & 0 & 0 \\
 0 & 1 & 2 & -3 & 1 & 0 & 0 \\
 -1 & -1 & -1 & 5 & -4 & 1 & 0 \\
 1 & 0 & 0 & -6 & 9 & -5 & 1 \\
\end{array}
\right).$$ We now apply the symmetrization process to this array to get a matrix that begins
$$\left(
\begin{array}{ccccccc}
 1 & 1 & 1 & 1 & 1 & 1 & 1 \\
 1 & 0 & -1 & -2 & -3 & -4 & -5 \\
 1 & -1 & -1 & 0 & 2 & 5 & 9 \\
 1 & -2 & 0 & 1 & 1 & -1 & -6 \\
 1 & -3 & 2 & 1 & 0 & -1 & 0 \\
 1 & -4 & 5 & -1 & -1 & -1 & 0 \\
 1 & -5 & 9 & -6 & 0 & 0 & 1 \\
\end{array}
\right).$$ The generating function for this matrix is then
$$\frac{1+xy}{(1-x+xy)(1-y+xy)}.$$
This matrix has a principal minor sequence that begins
$$1, -1, -2, 7, 42, -429, -7436, 218348, 10850216, -911835460, \ldots.$$
Multiplying the columns of this matrix by $(-1)^n$ then produces a matrix whose principal minor sequence begins
$$1, 1, 2, 7, 42, 429, 7436, 218348, 10850216, 911835460, \ldots.$$
Multiplying this latter matrix on the right by the transpose of the Riordan array $\left(\frac{1+x}{1-x}, x\right)$ we obtain the matrix that begins
$$\left(
\begin{array}{ccccccc}
 1 & 1 & 1 & 1 & 1 & 1 & 1 \\
 1 & 2 & 1 & 2 & 1 & 2 & 1 \\
 1 & 3 & 3 & 2 & 4 & 1 & 5 \\
 1 & 4 & 6 & 5 & 5 & 7 & 2 \\
 1 & 5 & 10 & 11 & 10 & 11 & 12 \\
 1 & 6 & 15 & 21 & 21 & 21 & 22 \\
 1 & 7 & 21 & 36 & 42 & 42 & 43 \\
\end{array}
\right).$$
This matrix has its generating function given by
$$\frac{(1+y)(1-xy)}{(1-y)(1+y-xy)(1-x-xy)}.$$
This again has a principal minor sequence that begins
$$1, 1, 2, 7, 42, 429, 7436, 218348, 10850216, 911835460, \ldots.$$
 We note that this latter matrix is made up of a reversed copy of the Riordan array $$\left(\frac{1-x}{1-3x+3x^2-2x^3}, \frac{x}{1-x}\right),$$ which begins
$$\left(
\begin{array}{ccccccc}
 1 & 0 & 0 & 0 & 0 & 0 & 0 \\
 2 & 1 & 0 & 0 & 0 & 0 & 0 \\
 3 & 3 & 1 & 0 & 0 & 0 & 0 \\
 5 & 6 & 4 & 1 & 0 & 0 & 0 \\
 10 & 11 & 10 & 5 & 1 & 0 & 0 \\
 21 & 21 & 21 & 15 & 6 & 1 & 0 \\
 43 & 42 & 42 & 36 & 21 & 7 & 1 \\
\end{array}
\right),$$ and a matrix which is not a Riordan array.
\end{example}

In this section, we have seen that there is a link between the Robbins numbers $A_{n+1}$ and certain Riordan arrays. In the sequel, we will find links to generating functions expressed as continued fractions, and to Hankel matrices. Thus we devote the next section to a brief overview of these topics.

We finish this section with an example which uses some of these ideas.
\begin{example} For this example we let $g(x)$ be the generating function $g(x)=\frac{1-x}{1-3x^2+x^3}$. We consider the symmetric matrix with generating function $$\frac{x-y}{xg(x)-yg(y)}.$$
This expands to give a matrix that begins
$$\left(
\begin{array}{ccccccc}
 1 & -1 & 1 & -1 & 1 & -1 & 1 \\
 -1 & -1 & 2 & -3 & 4 & -5 & 6 \\
 1 & 2 & 0 & -2 & 5 & -9 & 14 \\
 -1 & -3 & -2 & 1 & 1 & -6 & 15 \\
 1 & 4 & 5 & 1 & -1 & 0 & 6 \\
 -1 & -5 & -9 & -6 & 0 & 0 & 0 \\
 1 & 6 & 14 & 15 & 6 & 0 & 1 \\
\end{array}
\right).$$
The principal minors of this matrix begin
$$1, -2, -7, 42, 429, -7436, -218348, 10850216, \ldots.$$
Now we multiply on the left and right respectively by the Riordan array $\left(\frac{g(x)}{1-x},x\right)$ and its transpose to obtain the matrix of the last example that begins
$$\left(
\begin{array}{ccccccc}
 1 & 1 & 1 & 1 & 1 & 1 & 1 \\
 1 & -1 & -2 & -3 & -4 & -5 & -6 \\
 1 & -2 & 0 & 2 & 5 & 9 & 14 \\
 1 & -3 & 2 & 1 & -1 & -6 & -15 \\
 1 & -4 & 5 & -1 & -1 & 0 & 6 \\
 1 & -5 & 9 & -6 & 0 & 0 & 0 \\
 1 & -6 & 14 & -15 & 6 & 0 & 1 \\
\end{array}
\right).$$
This is because
$$\frac{g(x)}{1-x}  \frac{x-y}{xg(x)-yg(y)}\frac{g(y)}{1-y}=\frac{1}{(1-x+xy)(1-y+xy)}.$$
The sequence with generating function $g(x)$ begins
$$1, -1, 3, -4, 10, -15, 34,\ldots.$$  This is a signed version of \seqnum{A188022}. The reversion of the generating function $f(x)=xg(x)$, that is, the solution $u(x)$ of the equation $f(u)=x$ that satisfies $u(0)=0$,  expands to give the sequence that begins
$$0, 1, 1, -1, -6, -8, 15, 84,\ldots.$$
We now take the Hankel transform $h_n$ of the sequence $t_n$ that begins $1, 1, -1, -6, -8, 15, 84,\ldots$. By definition, this is given by
$$h_n=|t_{i+j}|_{0 \le i,j \le n}.$$
We find that the Hankel transform sequence $h_n$ begins
$$1, -2, -7, 42, 429, -7436, -218348, 10850216, \ldots.$$

Sequences in this note are referred to by their A$nnnnnn$ number in the On-Line Encyclopedia of Integer Sequences, if recorded there \cite{SL1, SL2}.
\end{example}
\begin{figure}
\begin{center}
\begin{tikzpicture}
\planepartition{{5,3,2,2},{4,2,2,1},{2,1},{1}}
\end{tikzpicture}
\end{center}
\caption{A plane partition}
\end{figure}
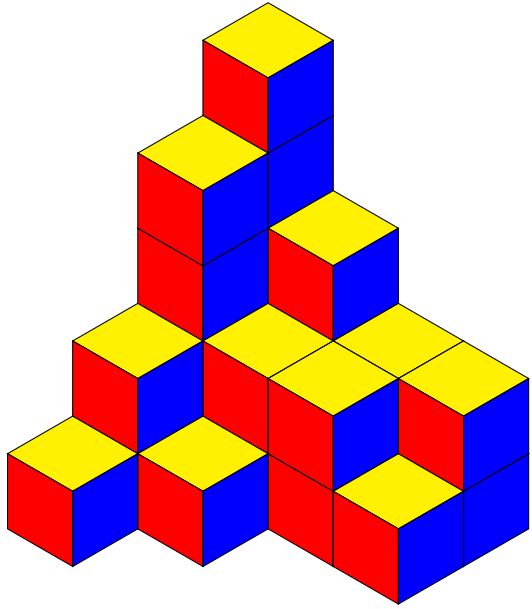

\section{Transforms, Riordan arrays, continued fractions, and Hankel transforms}
In this note we shall use a number of sequence transforms, which we now describe.
\begin{enumerate}
\item \textbf{The binomial transform}. Given a sequence $a_n$, its binomial transform is given by $b_n=\sum_{k=0}^n \binom{n}{k}a_k$. If the sequence $a_n$ has a generating function $g(x)$, then the generating function of $b_n$ is given by $\frac{1}{1-x} g\left(\frac{x}{1-x}\right)$. More generally, the $r$-th binomial transform is given by $\sum_{k=0}^n \binom{n}{k}r^{n-k}a_k$ with generating function $\frac{1}{1-rx} g\left(\frac{x}{1-rx}\right)$. The inverse binomial transform of $a_n$ is given by $\sum_{k=0}^n \binom{n}{k}(-1)^{n-k}a_k$.
\item \textbf{The INVERT$(r)$ transform}. Given a sequence $a_n$ with generating function $g(x)$, the INVERT$(r)$ transform of $a_n$ is the sequence with generating function $\frac{g(x)}{1-rxg(x)}$. We call the INVERT$(1)$ transform the invert transform.
\item \textbf{The reversion transform} Given a sequence $a_n$ with $a_0=0, a_1 \ne 0$, and generating function $f(x)$, the reversion transform of this sequence is the sequence given by the reversion of the power series $f(x)$. That is, the reversion transform of $a_n$ is the expansion of the solution $u(x)$ to the equation $f(u)=x$ which satisfies $u(0)=0$. We write $u(x)=\bar{f}(x)=\Rev(f(x))$. This transform is involutory.
\item \textbf{The revert transform} The \emph{revert} transform $b_n$ of the sequence $a_n$, where $a_0 \ne 0$, with generating function $g(x)$, is given by expansion of $\frac{1}{x} \Rev(xg(x))$. We use the notations $\rev(a_n)$ and $\rev(g(x))$ for this. We have $$b_n = \frac{1}{n+1} [x^n]\frac{1}{g(x)^{n+1}}.$$ This last equation is a consequence of Lagrange Inversion \cite{H2, LI}. This transform is involutory.
\item \textbf{The log revert transform} The \emph{log revert} transform $b_n$ of the sequence $a_n, a_0 \ne 0$ with generating function $g(x)$ is given by the expansion of $\frac{x \frac{d}{dx}\Rev(xg(x))}{\Rev(xg(x))}$. We have $$b_n=[x^n] \frac{1}{g(x)^n}.$$
\end{enumerate}
Note that the term ``log revert transform'' is not a standard term, but we use it here for convenience. It could also justifiably be termed a ``hitting time'' transform.
\begin{example}
We have
$$\frac{1}{n+1} [x^n] ((1+x)^3)^{n+1}=\frac{1}{2n+3} \binom{3n+3}{n+1} \Longrightarrow 1,3,26,646,\ldots,$$
and
$$ [x^n] ((1+x)^3)^n = \binom{3n}{n} \Longrightarrow 3^n[1,2,11,270,7429,\ldots].$$
Here, we have used the symbol $\Longrightarrow$ to indicate that the sequence has the following Hankel transform. The notation $r^n[a_0, a_1, a_2,\ldots]$ indicates the sequence $r^0a_0, r^1a_1, r^2a_2,\ldots$.
\end{example}
To describe Riordan arrays \cite{book, SGWW}, we consider a field $\mathbb{F}$ of characteristic $0$, and we define two subsets of the ring $\mathbb{F}[[x]]$ of formal power series with coefficients in $\mathbb{F}$ defined by
$$\mathcal{F}_0=\left\{ g(x) \in \mathbb{F}[[x]]\,|\, g(x)= g_0+ g_1 x+ g_2x^2+g_3 x^3+\cdots, g_0 \ne 0\right\},$$
and
$$\mathcal{F}_1=\left\{ f(x) \in \mathbb{F}[[x]]\,|\, f(x)= f_1 x+ f_2x^2+f_3 x^3+\cdots, f_1 \ne 0\right\}.$$

The elements $g(x)$ in $\mathcal{F}_0$ are thus invertible for multiplication - that is, $\frac{1}{g(x)}$ exists and is again in $\mathcal{F}_0$. The elements $f(x)$ in $\mathcal{F}_1$ are composable with a compositional inverse (or reversion), given by $\bar{f}(x)$ where $u(x)=\bar{f}(x)=\Rev(f)(x)$ is the solution $u(x)$ of the equation
$f(u)=x$ that satisfies $u(0)=0$.
\begin{example} For $f(x)=x(1-x)=x-x^2$ we have $\bar{f}(x)=\frac{1-\sqrt{1-4x}}{2}$. This follows since the solution to $$u-u^2=x$$ or $$u^2-u+x=0$$ is given by $u=\frac{1 \pm \sqrt{1-4x}}{2}$, and we must choose the minus sign in order to have $u(0)=0$.
\end{example}
By a Riordan array we mean a pair $(g(x), f(x)) \in \mathcal{F}_0 \times \mathcal{F}_1$. Associated with this couple is a matrix with elements $t_{n,k}$ in $\mathbb{F}$ given by
$$t_{n,k}=[x^n] g(x)f(x)^k.$$ Here, $[x^n]$ is the functional on $\mathbb{F}[[x]]$ that extracts the coefficient of $x^n$ in the expansion of a power series. We often switch between these two views of a Riordan array when no confusion arises. The bivariate generating function of the matrix that represents $(g(x), f(x))$ is given by
$$B(x,y)=\frac{g(x)}{1-y f(x)}.$$ The triangle whose bivariate generating function is given by
$$ \rev_x\{\frac{g(x)}{1-y f(x)}\}$$ is called the \emph{inversion} of the original Riordan array.
The set of Riordan arrays is a group, where the group multiplication is given by
$$(g(x), f(x))\cdot (u(x), v(x))= (g(x)u(f(x)), v(f(x)),$$ and the unit is given by $(1, x)$. The inverse of the array $(g(x), f(x))$ is given by
$$(g(x), f(x))^{-1} = \left(\frac{1}{g(\bar{f})}, \bar{f}(x)\right).$$
The symmetrization of a Riordan array $(g(x), f(x))$ is the matrix with generating function
$$B\left(xy, \frac{1}{x}\right)+B\left(xy, \frac{1}{y}\right)-g(xy).$$
With this structure, the group of Riordan arrays is none other than the semi-direct product of $\mathcal{F}_0$ with $\mathcal{F}_1$.

Riordan arrays have a rich structure, evidenced for instance by the fact that embedded in a given Riordan array are many other Riordan arrays. For example, the array whose elements are given by $t_{2n-k, n}$ is again a Riordan array, called the \emph{vertical half} of the array $(g(x), f(x))$. It is in fact the Riordan array given by
$$V=\left(\frac{x \phi'(x) g(\phi(x))}{\phi(x)}, \phi(x)\right),$$ where
$$\phi(x)=\text{Rev}\left(\frac{x^2}{f(x)}\right).$$
\begin{example} Pascal's triangle $\left(\binom{n}{k}\right)$ is a Riordan array, given by $\left(\frac{1}{1-x}, \frac{x}{1-x}\right)$. Its vertical half $\left(\binom{2n-k}{n}\right)$ is the Riordan array
$$\left(\frac{1}{\sqrt{1-4x}}, xc(x)\right).$$  Multiplying this by the Riordan array $\left(\frac{1}{1-x}, x\right)$ yields the Riordan array $\left(\frac{1}{(1-x)\sqrt{1-4x}}, xc(x)\right)$ whose principal minors give us the Robbins numbers.

We note that the symmetrization of $\left(\binom{n}{k}\right)$ is given by $\left(\binom{n+k}{k}\right)$.
\end{example}
The Riordan arrays we have considered heretofore have been defined by generating functions that are so-called \emph{ordinary} generating functions. Another form of Riordan array is the $exponential$ Riordan array, again defined by two generating functions $g(x)$ and $f(x)$, but now, these are exponential generating functions of the type
$$g(x)=g_0 + g_1 \frac{x}{1!} + g_2 \frac{x^2}{2!} = g_3 \frac{x^3}{3!} + \ldots, \quad g_0 \ne 0,$$ and
$$f(x)=f_1 \frac{x}{1!}+ f_2\frac{x^2}{2!} + \cdot, \quad f_0=0, f_1 \ne 0.$$
In this case, we have a matrix representation of the pair $(g(x), f(x))$ where the $(n,k)$-th element of the corresponding matrix is given by
$$t_{n,k}= \frac{n!}{k!} g(x)f(x)^k.$$ To distinguish from ordinary Riordan arrays, we use the notation $[g(x), f(x)]$ to denote this exponential Riordan array.

Elements of the (ordinary) Riordan group of the form $(g(x), xg(x))$ are called Bell matrices. We have the following result \cite{Inv}.
\begin{proposition} The inversion of the (ordinary) Bell matrix $(g(x), xg(x))$ is given by the exponential Riordan array $\left[\left(\rev(g(x))\right)_e, -x\right]$, where $G_e(x)$ is the exponential generating function of the expansion of $G(x)$.
\end{proposition}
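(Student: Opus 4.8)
The plan is to compute both sides of the asserted identity as explicit triangles $t_{n,k}$ and to check that they agree entry by entry. On the left-hand side I would start from the bivariate generating function of the Bell matrix $(g(x), xg(x))$, which, since here $f(x)=xg(x)$, is
$$B(x,y)=\frac{g(x)}{1-xy\,g(x)}.$$
By definition, the inversion is the triangle whose bivariate generating function is $\rev_x\{B(x,y)\}$, the revert transform taken in the variable $x$ with $y$ treated as a parameter (this is legitimate because $B(0,y)=g(0)\neq 0$). Using the coefficient form of the revert transform recorded earlier, namely $b_n=\frac{1}{n+1}[x^n]\frac{1}{(\cdot)^{n+1}}$, I get that the $x^n$-coefficient of $\rev_x\{B(x,y)\}$ equals $\frac{1}{n+1}[x^n]B(x,y)^{-(n+1)}$.

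The crucial observation, which is special to Bell matrices, is the factorization
$$\frac{1}{B(x,y)}=\frac{1-xy\,g(x)}{g(x)}=\frac{1}{g(x)}-xy,$$
so that $B(x,y)^{-(n+1)}=\left(\frac{1}{g(x)}-xy\right)^{n+1}$. Expanding this by the binomial theorem and extracting the coefficient of $y^k$ isolates a single summand; a further extraction of $x^n$ then gives the $(n,k)$-entry of the inversion as
$$t_{n,k}^{\text{inv}}=\frac{1}{n+1}\binom{n+1}{k}(-1)^k[x^{n-k}]\frac{1}{g(x)^{\,n+1-k}}.$$
The key simplification is to notice that the exponent satisfies $n+1-k=(n-k)+1$, so that by the revert-transform formula $G_m=\frac{1}{m+1}[x^m]g(x)^{-(m+1)}$ (the $m$-th entry of $\rev(g(x))$) the inner coefficient is exactly $(n-k+1)G_{n-k}$. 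Absorbing the factors via $\frac{1}{n+1}\binom{n+1}{k}=\frac{n!}{k!\,(n+1-k)!}$ and $\frac{n-k+1}{(n+1-k)!}=\frac{1}{(n-k)!}$ collapses this to $t_{n,k}^{\text{inv}}=(-1)^k\binom{n}{k}G_{n-k}$.

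For the right-hand side I would compute the entries of the exponential Riordan array $\left[(\rev(g(x)))_e,\,-x\right]$ directly from the definition $t_{n,k}=\frac{n!}{k!}[x^n]\,(\rev(g(x)))_e(x)\,(-x)^k$. Writing $(\rev(g(x)))_e(x)=\sum_{m}G_m\,x^m/m!$ and $(-x)^k=(-1)^k x^k$ yields $t_{n,k}=(-1)^k\frac{n!}{k!\,(n-k)!}G_{n-k}=(-1)^k\binom{n}{k}G_{n-k}$, which agrees with the left-hand side entry by entry, establishing the claim. The main obstacle is not any single hard estimate but the bookkeeping in the middle step: the whole argument rests on the clean factorization $1/B=1/g-xy$, which fails for a general Riordan array and is precisely what forces the hypothesis $f=xg$, together with the exponent coincidence $n+1-k=(n-k)+1$ that lets the revert-transform formula apply verbatim to the inner coefficient. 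I would also take care with the range of $k$, noting that the $k=n+1$ term of the binomial expansion contributes an $[x^{-1}]$ and hence vanishes, so that the resulting triangle is genuinely lower-triangular.
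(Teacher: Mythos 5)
Your proof is correct, and in fact the paper never proves this proposition at all: it is stated with a citation to the external reference \cite{Inv}, so your argument supplies a self-contained verification that the paper omits. Both halves of your computation check out. The factorization $1/B(x,y)=1/g(x)-xy$ is indeed the special feature of Bell matrices that makes the binomial expansion finite and clean, the exponent coincidence $n+1-k=(n-k)+1$ correctly converts the inner coefficient into $(n-k+1)G_{n-k}$ via the Lagrange-inversion formula $G_m=\frac{1}{m+1}[x^m]g(x)^{-(m+1)}$, and the factorial bookkeeping collapses to $t^{\mathrm{inv}}_{n,k}=(-1)^k\binom{n}{k}G_{n-k}$, which matches the entries $\frac{n!}{k!}[x^n](\rev(g))_e(x)(-x)^k$ of the exponential Riordan array (note you silently, and correctly, repaired the paper's typo in the definition of exponential Riordan array entries, which omits the $[x^n]$). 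Your closing remarks are also the right ones: the $k=n+1$ term dies because it would require a coefficient of $x^{-1}$, and the revert in $x$ with $y$ as a parameter is legitimate since $B(0,y)=g_0\neq 0$, so one can work in $\mathbb{F}[y][[x]]$. As an external sanity check, your formula $t^{\mathrm{inv}}_{n,k}=(-1)^k\binom{n}{k}G_{n-k}$ reproduces exactly the coefficient array displayed in the paper's Section 5 (the array of the polynomials $\bar{P}_n(r)$, identified there as $[g_e(x),-x]$ with $g_e$ the EGF of $1,0,-2,-1,7,9,-26,\ldots$), which confirms the sign and indexing conventions.
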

The \emph{Hankel transform} \cite{Kratt1, Kratt2, Layman} of a sequence $a_n$ is the sequence $h_n$ where $$h_n=|a_{i+j}|_{0 \le i,j \le n}.$$  Note that we use a $n+1 \times n+1$ matrix for this (other authors use a $n \times n$ matrix). If the g.f. of $a_n$ is expressible as a continued fraction of the form
$$\cfrac{\mu_0}{1-\alpha_0 x-
\cfrac{\beta_1 x^2}{1-\alpha_1 x -
\cfrac{\beta_2 x^2}{1-\alpha_2 x -
\cfrac{\beta_3 x^2}{1-\alpha_3 x -\cdots}}}},$$
then the Hankel transform of $a_n$ is given by \cite{Kratt1} the Heilermann formula
$$h_n=\mu_0^{n+1}\beta_1^n\beta_2^{n-1}\cdots\beta_{n-1}^2\beta_n.$$

If the g.f. of $a_n$ is expressible as the following type of continued fraction:
$$\cfrac{\mu_0}{1+
\cfrac{\gamma_1 x}{1+
\cfrac{\gamma_2 x}{1+\cdots}}},$$ then we have
$$h_n=\mu_0^{n+1}(\gamma_1 \gamma_2)^n (\gamma_3 \gamma_4)^{n-1} \cdots (\gamma_{2n-3} \gamma_{2n-2})^2 (\gamma_{2n-1}\gamma_{2n}).$$

Given a sequence $a_n$ with generating function $g(x)$, then the following sequences will have the same Hankel transform:
\begin{enumerate}
\item $(-1)^n a_n$, with generating function $g(-x)$,
\item The $r$-th binomial transform $\sum_{=0}^n \binom{n}{k}r^{n-k}a_k$ of $a_n$, with generating function $\frac{1}{1-rx}g\left(\frac{x}{1-rx}\right)$,
\item The $r$-invert transform of $a_n$, with generating function $\frac{g(x)}{1-r g(x)}$.
\end{enumerate}
Thus any combination of these, applied to a sequence, will leave the Hankel transform unchanged.

The \emph{revert transform} of $g(x)$, where $g(0)\ne 0$, is given by $\frac{1}{x}\text{Rev}(xg(x))$. We then have the following.
\begin{enumerate}
\item The revert transform of the binomial transform of $[x^n]g(x)$ is the invert transform of the revert transform of $[x^n]g(x)$ \cite{Inv},
\item The revert transform of the invert transform of $[x^n]g(x)$ is the binomial transform of the revert transform of $[x^n]g(x)$.
\end{enumerate}
A consequence of this is the following result.
\begin{proposition} The revert transforms of the sequences with generating function given by a continued fraction of the form
$$\cfrac{1}{1- (\alpha+s) x - \cfrac{\beta x^2}{1-(\alpha+r) x - \cfrac{\beta x^2}{1-(\alpha+r) x}}}$$ all have the same Hankel transform, for different values of $r$ and $s$.
\end{proposition}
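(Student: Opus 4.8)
The plan is to show that any two generating functions in this family are related by a composition of a binomial and an invert transform, and then to transport that relation through the revert transform using the two interchange rules recorded above, exploiting the fact that both the binomial and the invert transforms preserve Hankel transforms. First I would record how these two transforms act on the coefficients of the (terminated) Jacobi continued fraction. Writing $g_{r,s}(x)$ for the displayed fraction, its partial numerators are all equal to $\beta$, while its partial denominators carry $\alpha_0=\alpha+s$ at the top level and $\alpha_1=\alpha_2=\alpha+r$ below. The $\mathrm{INVERT}(u)$ transform replaces $g$ by $g/(1-uxg)$, equivalently sends $1/g\mapsto 1/g-ux$; since $1/g_{r,s}=1-(\alpha+s)x-\beta x^2/(\cdots)$, this adds $u$ to $\alpha_0$ alone, so that $\mathrm{INVERT}(u)\,g_{r,s}=g_{r,\,s+u}$. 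The $t$-th binomial transform, with its substitution $g(x)\mapsto\frac{1}{1-tx}\,g\!\left(\frac{x}{1-tx}\right)$, shifts every partial denominator coefficient $\alpha_i$ by $t$ while leaving the $\beta_i$ unchanged; applied to the terminated fraction this gives $\mathrm{binomial}_t\,g_{r,s}=g_{\,r+t,\,s+t}$.

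Next I would observe that these two moves generate the whole family. Given target parameters $(r',s')$, the choices $t=r'-r$ and $u=s'-s-(r'-r)$ produce
$$g_{r',s'}=\mathrm{INVERT}(u)\,\bigl(\mathrm{binomial}_t\,g_{r,s}\bigr),$$
so every member of the family is obtained from any fixed one by first a binomial and then an invert transform.

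Finally I would apply the revert transform to this identity and invoke the two interchange rules. Since $\mathrm{rev}$ turns a binomial transform into an invert transform of the reverted series, and an invert transform into a binomial transform of the reverted series, the right-hand side becomes a binomial transform of an invert transform of $\mathrm{rev}(g_{r,s})$, with some parameters whose precise values are irrelevant. Because both the binomial and the invert transforms leave the Hankel transform unchanged, the Hankel transform of $\mathrm{rev}(g_{r',s'})$ equals that of $\mathrm{rev}(g_{r,s})$, which is exactly the assertion.

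The main obstacle I anticipate is justifying the coefficient-level description of the transforms on the continued fraction, in particular the fact that the binomial transform shifts all partial denominators $\alpha_i$ uniformly (the invert case being immediate from $1/g\mapsto 1/g-ux$). One must also verify that these rules behave correctly for the truncated three-level fraction, namely that the bottom level shifts along with the rest and no spurious tail is introduced, so that the transformed object is again of exactly the prescribed form. Once this bookkeeping is secured, the remainder is a direct chain of Hankel-invariant operations.
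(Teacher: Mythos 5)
Your proof is correct and takes essentially the same route as the paper, which presents this proposition as an immediate consequence of the two facts stated just before it: that the binomial and invert transforms preserve Hankel transforms, and that the revert transform interchanges these two operations. Your argument simply makes explicit the continued-fraction bookkeeping the paper leaves implicit --- that $\mathrm{INVERT}(u)$ shifts only the top coefficient (via $1/g \mapsto 1/g - ux$) while the $t$-th binomial transform shifts every partial denominator uniformly, so that the two moves generate the whole $(r,s)$-family --- and this bookkeeping is sound, including for the terminated fraction, since it amounts to the standard J-fraction shift rule with vanishing $\beta_k$ for $k\ge 3$.
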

\begin{example}
We have
$$\frac{1-3x+2x^2}{1-4x+3x^2+x^3}=\cfrac{1}{1-x-\cfrac{x^2}{1-\frac{3}{2}x-\cfrac{\frac{1}{4}x^2}{1-\frac{3}{2}x}}}.$$ This generating function expands to give the sequence \seqnum{A121449} which begins
$$1, 1, 3, 8, 22, 61, 170, 475, 1329, 3721, 10422, 29196,\ldots.$$
We have
$$\frac{1-x}{1-3x+x^3}=\cfrac{1}{1-2x-\cfrac{x^2}{1-\frac{1}{2}x-\cfrac{\frac{1}{4}x^2}{1-\frac{1}{2}x}}}.$$
This expands to give the sequence \seqnum{A052536} which counts compositions of $n$ when parts $1$ and $2$ are of two kinds.
The Hankel transform of the revert transforms of these sequences are both equal to the sequence that begins
$$1, -2, -7, 42, 429, -7436,\ldots.$$
Note that the Hankel transform of the revert transform of $\frac{1-ix}{1-3ix-x^3}$ will then have a Hankel transform that begins $$1, 2, 7, 42, 429, 7436,\ldots.$$
The sequence with generating function $$\frac{1}{1-4x+3x^2+x^3}=\cfrac{1}{1-4x+\cfrac{3x^2}{1-\frac{1}{3}x+\cfrac{\frac{1}{9}x^2}{1+\frac{1}{3}x}}},$$ begins
$$1, 4, 13, 39, 113, 322, 910, 2561, 7192, 20175, \ldots.$$ This is \seqnum{A215404}.
Its revert transform has its Hankel transform equal to the sequence
$$ 1, 3, 26, 646, 45885, 9304650, 5382618660, 8878734657276, \ldots.$$ This is
\seqnum{A005156}$(n+1)$, where \seqnum{A005156}  counts the number of alternating sign $(2n+1) \times (2n+1)$ matrices symmetric about the vertical axis. This can be seen since the generating function $(1-x)^2$ of the revert transform of $\frac{1}{n+1}\binom{3n+1}{n+1}$ \seqnum{A006013} has the continued fraction expression
$$(1-x)^2=\cfrac{1}{1+2x+\cfrac{3x^2}{1-\frac{4}{3}x+\cfrac{\frac{1}{9}x^2}{1-\frac{2}{3}x}}}.$$ Alternatively, we can note that $\frac{1}{(1-x)^3}$, the generating function of the revert transform of $\frac{1}{2n+3}\binom{3n+3}{n+1}(-1)^n$, has its generating function given by
$$\frac{1}{(1-x)^3}=\cfrac{1}{1-3x+\cfrac{3x^2}{1+\frac{1}{3}x+\cfrac{\frac{1}{9}x^2}{1-\frac{1}{3}x}}}.$$
\end{example}
\begin{example} \emph{The Lawrence challenge.} In 2011, Peter Lawrence issued the following challenge: ``find a $3 \times 3$ integer matrix with ``smallish'' elements whose powers generate a sequence that is not in the OEIS''. The recorded answer to this challenge is \seqnum{A200715}. This is essentially the sequence that begins
$$1, 1, -2, -4, 3, 13, 0, -36, -23, 85, 118, -160, -429, 16.\ldots$$ It is generated by following the $(2,3)$ element of the sequences of matrices
$$\left(
\begin{array}{ccc}
 0 & 1 & 0 \\
 0 & 0 & 1 \\
 1 & -3 & 1 \\
\end{array}
\right)^n.$$
This sequence has generating function
$$\frac{1}{1-x+3x^2-x^3}=\cfrac{1}{1+x+\cfrac{3x^2}{1-\frac{1}{3}x+\cfrac{\frac{1}{9}x^2}{1+\frac{1}{3}x}}}.$$
Its revert transform begins
$$1, -1, 4, -11, 41, -146, 564, -2199, 8835, -35989, 148912,\ldots$$ which is an alternating sign version of
\seqnum{A030981}, which counts the number of rooted non-crossing trees with $n$ nodes and no non-root nodes of degree $1$ (E. Deutsch). The Hankel transform of this latter sequence is again
$$1, 3, 26, 646, 45885, 9304650,\ldots.$$
\end{example}

In general, the generating function of the Hankel matrix associated to the expansion of $g(x)$ is given by
$$\frac{x g(x)- y g(y)}{x-y}.$$ When $g(x)=\frac{1}{x}\Rev(f(x))$, then this takes the form
$$\frac{\Rev(f(x))-\Rev(f(y))}{x-y}.$$ We apply the fundamental theorem of Riordan arrays as follows.
\begin{align*} (1, f(y))\cdot(1, f(x))\frac{\Rev(f(x))-\Rev(f(y))}{x-y}
&=(1, f(y)) \frac{\Rev(f(x))(f(x))- \Rev(f(y))}{f(x)-y}\\
&=\frac{\Rev(f(x))(f(x))- \Rev(f(y))(f(y))}{f(x)-f(y)}\\
&=\frac{x-y}{f(x)-f(y)}.\end{align*}
We thus have the following result.
\begin{proposition} Let $g(x)$ be the revert transform of $G(x)$. Then the Hankel transform of the expansion of $g(x)$ is equal to the principal minor sequence of the symmetric matrix with generating function
$$\frac{x-y}{xG(x)-yG(y)}.$$\end{proposition}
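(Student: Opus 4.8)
The plan is to recognize that the three-line Riordan-array manipulation displayed just before the statement is nothing but a congruence transformation of Hankel matrices, and that congruence by a unit lower-triangular matrix leaves every leading principal minor unchanged.

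First I would record the starting object. By the formula quoted earlier in this section, the Hankel matrix attached to the expansion of $g(x)$ has bivariate generating function $\frac{xg(x)-yg(y)}{x-y}$, and its sequence of leading principal minors is by definition the Hankel transform of $g$. Since $g$ is the revert transform of $G$, we have $xg(x)=\Rev(xG(x))$; writing $f(x)=xG(x)\in\mathcal{F}_1$ and $R=\Rev(f)$, the generating function of this Hankel matrix becomes $\frac{R(x)-R(y)}{x-y}$, exactly the object on which the excerpt's computation acts.

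Next I would translate that computation into matrix language. Applying $(1,f(x))$ in the variable $x$ substitutes $x\mapsto f(x)$, which at the level of matrices is left multiplication by the (lower-triangular) Riordan array $L=(1,f)$, via the fundamental theorem of Riordan arrays; likewise applying $(1,f(y))$ in $y$ is right multiplication by $L^{T}$. The cited display shows that the outcome of these two operations is $\frac{x-y}{f(x)-f(y)}=\frac{x-y}{xG(x)-yG(y)}$. Denoting by $H$ the Hankel matrix of $g$ and by $S$ the symmetric matrix with this generating function, we obtain the identity $S=LHL^{T}$. It then remains only to compare minors: because $L$ is lower-triangular, the top-left $m\times m$ block of $LHL^{T}$ equals $L_{m}H_{m}L_{m}^{T}$, whence
$$\det(S_m)=\det(L_m)^{2}\,\det(H_m).$$
With the normalization $G(0)=1$ that is in force throughout the applications, $f(x)=x+\cdots$, so $L$ has unit diagonal, $\det(L_m)=1$, and the leading principal minors of $S$ agree with those of $H$, i.e.\ with the Hankel transform of $g$.

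The computational heart—the telescoping $R(f(x))=x$, $R(f(y))=y$ under the two substitutions—has already been carried out in the display above the statement, so the remaining work is bookkeeping: checking that the two bivariate-generating-function operations match $H\mapsto LHL^{T}$ on the nose, and invoking the block structure of a product with a triangular factor. The one genuinely load-bearing hypothesis is the triangularity (and unit diagonal) of $L$, since that is precisely what forces $\det(S_m)=\det(H_m)$; this is the step I would be most careful to justify, noting that for general $G(0)\neq 0$ the two sequences differ only by the explicit factor $\det(L_m)^{2}=G(0)^{m(m-1)}$.
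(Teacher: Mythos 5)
Your proof is correct and takes essentially the same route as the paper's: the displayed Riordan-array computation is read as the congruence $S = LHL^{T}$ with $L=(1,f)$, $f(x)=xG(x)$, and the unit lower-triangularity of $L$ then forces equality of the leading principal minors. Your closing remark that the unit diagonal requires $G(0)=1$ (with the explicit correction factor $\det(L_m)^2=G(0)^{m(m-1)}$ otherwise) makes precise a normalization that the paper's proof leaves tacit when it asserts the matrices have $1$'s on the diagonal.
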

\begin{proof} That the determinant sequences are equal follows from the fact that the matrix representations of the Riordan arrays $(1, f(x))$ and $(1, f(y))$ above are lower triangular matrices with $1$'s on the diagonal. Here, $f(x)=xG(x)$.
\end{proof}
In many cases, the generating function $G(x)$ may be simpler to deal with than $g(x)$.  This technique was used by Gessel and Xin \cite{Gessel} in studying the Hankel transform of the ternary numbers.  \section{Centered polygon numbers}
The centered polygon numbers are the number sequences with generating function
$$\frac{1+(r-2) x+x^2}{(1-x)^3}.$$ For $r=3,4,5$ they are called, respectively, the triangular, square and pentagonal polygon numbers.
\begin{center}
\begin{tabular}{|c|c|c|c|c|c|c|c|}\hline
$r=0$ & $1$ & $1$ & $1$ & $1$ & $1$ & $1$ & \seqnum{A000012}\\ \hline
$r=1$ & $1$ & $2$ & $4$ & $7$ & $11$ & $16$ & \seqnum{A000124}\\ \hline
$r=2$ & $1$ & $3$ & $7$ & $13$ & $21$ & $31$ & \seqnum{A002061}\\ \hline
$r=3$ & $1$ & $4$ & $10$ & $19$ & $31$ & $46$ & \seqnum{A005448}\\ \hline
$r=4$ & $1$ & $5$ & $13$ & $25$ & $41$ & $61$ & \seqnum{A001844}\\ \hline
$r=5$ & $1$ & $6$ & $16$ & $31$ & $51$ & $76$ & \seqnum{A005891}\\ \hline
\end{tabular}
\end{center}

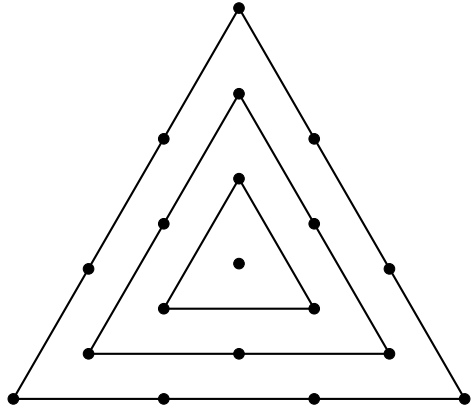
\begin{figure}
\begin{center}
\begin{tikzpicture}[scale=1]
\draw[black, thick] (0,0)--(6,0)--(3,5.2) -- cycle;
\draw[black, thick] (1,0.6)--(5,0.6)--(3,4.06) -- cycle;
\draw[black, thick] (2,1.2)--(4,1.2) --(3,2.93) -- cycle;

\filldraw[black] (0,0) circle (2pt);
\filldraw[black] (6,0) circle (2pt);
\filldraw[black] (3,5.2) circle (2pt);
\filldraw[black] (1,0.6) circle (2pt);
\filldraw[black] (5,0.6) circle (2pt);
\filldraw[black] (3,4.06) circle (2pt);
\filldraw[black] (2,1.2) circle (2pt);
\filldraw[black] (4,1.2) circle (2pt);
\filldraw[black] (3,2.93)  circle (2pt);

\filldraw[black] (3,1.8)  circle (2pt);
\filldraw[black] (3,0.6)  circle (2pt);
\filldraw[black] (2,2.33) circle(2pt);
\filldraw[black] (4,2.33) circle(2pt);

\filldraw[black] (1,1.73) circle(2pt);
\filldraw[black] (5,1.73) circle(2pt);
\filldraw[black] (2,3.46) circle(2pt);
\filldraw[black] (4,3.46) circle(2pt);
\filldraw[black] (2,0) circle (2pt);
\filldraw[black] (4,0) circle (2pt);
\end{tikzpicture}
\caption{Centered triangular numbers $1,4,10,19,\ldots$}
\end{center}
\end{figure}

The sequence for general $r$ begins
$$1, r + 1, 3r + 1, 6r + 1, 10r + 1, 15r + 1, 21r + 1, 28r + 1, \ldots,$$
with general term $1+r \binom{n+1}{2}$. We note that this sequence can be obtained as
$$\sum_{k=0}^n N_{n,k}(\binom{1}{k}r-\binom{0}{k}(r-1)),$$ where $N_{n,k}=\frac{1}{k+1}\binom{n+1}{k}\binom{n}{k}$ are the Narayana numbers, or as
$$\sum_{k=0}^n \binom{n}{k}(r\binom{2}{k}-r \binom{1}{k}+\binom{0}{k}).$$

The revert transform of this sequence then begins
$$1, -r - 1, 2r^2 + r + 1, - 5r^3 - r - 1, 14r^4 - 7r^3 + r + 1, - 42r^5 + 42·r^4 - r - 1,\ldots.$$
This is a polynomial sequence with coefficient array
$$\left(
\begin{array}{cccccccc}
 1 & 0 & 0 & 0 & 0 & 0 & 0 & 0 \\
 -1 & -1 & 0 & 0 & 0 & 0 & 0 & 0 \\
 1 & 1 & 2 & 0 & 0 & 0 & 0 & 0 \\
 -1 & -1 & 0 & -5 & 0 & 0 & 0 & 0 \\
 1 & 1 & 0 & -7 & 14 & 0 & 0 & 0 \\
 -1 & -1 & 0 & 0 & 42 & -42 & 0 & 0 \\
 1 & 1 & 0 & 0 & 30 & -198 & 132 & 0 \\
 -1 & -1 & 0 & 0 & 0 & -297 & 858 & -429 \\
\end{array}
\right).$$
The Hankel transform of this revert transform will again be a polynomial sequence in $r$. In fact, this Hankel transform $h_n(r)$ begins
$$1, r(r-1), r^3(r^3 - 3r^2 + 3r - 2), r^6(r^6 - 6r^5 + 15r^4 - 24r^3 + 30r^2 - 24r + 6), \ldots.$$
Dividing by $r^{\binom{n+1}{2}}$, we find that $\frac{h_n(r)}{r^{\binom{n+1}{2}}}$ begins
$$1, r - 1, r^3 - 3r^2 + 3r - 2, r^6 - 6r^5 + 15r^4 - 24r^3 + 30r^2 - 24r + 6,\ldots.$$
We give below values for this sequence for $r=0,\ldots,4$.
\begin{center}
\begin{tabular}{|c|c|c|c|c|c|c|c|}\hline
$r=0$ & $1$ & $-1$ & $-2$ & $6$ & $33$ & $-286$ $\ldots$ \\ \hline
$r=1$ & $1$ & $0$ & $-1$ & $-2$ & $3$ & $18$ $\ldots$ \\ \hline
$r=2$ & $1$ & $1$ & $0$ & $-2$ & $-5$ & $-14$ $\ldots$ \\ \hline
$r=3$ & $1$ & $2$ & $7$ & $42$ & $429$ & $7436$ $\ldots$ \\ \hline
$r=4$ & $1$ & $3$ & $26$ & $646$ & $45885$ & $9304650$ $\ldots$ \\ \hline
\end{tabular}
\end{center}
Thus for $r=3$ (the centered triangle polygon numbers) we obtain a scaled Hankel transform that gives
$$1,2,7,42,429, 7436,\ldots.$$ The sequences for $r=0$ and $r=4$ are of importance as well in the theory of plane partitions and alternating sign matrices. We shall consider them in a separate section of this note.

We let $g(x)=\frac{1+x+x^2}{(1-x)^3}$, and we form the bivariate generating function
$$\frac{x - y}{x g(x)- y g(y)}.$$ This expands to give the symmetric matrix $M$ that begins
$$\left(
\begin{array}{ccccccc}
 1 & -4 & 6 & -3 & -3 & 6 & -3 \\
 -4 & 22 & -51 & 57 & -6 & -78 & 111 \\
 6 & -51 & 189 & -378 & 351 & 189 & -1026 \\
 -3 & 57 & -378 & 1296 & -2457 & 1809 & 3078 \\
 -3 & -6 & 351 & -2457 & 8424 & -15444 & 8073 \\
 6 & -78 & 189 & 1809 & -15444 & 54378 & -97011 \\
 -3 & 111 & -1026 & 3078 & 8073 & -97011 & 354294 \\
\end{array}
\right).$$
By construction, we obtain that the scaled principal minors of this matrix, $\frac{|M_n|}{3^{\binom{n+1}{2}}}$, yield the sequence
$$1, 2, 7, 42, 429, 7436, 218348, 10850216, 911835460,  \ldots.$$
By multiplying on the left and right, respectively, by $\left(\frac{g(x)}{1-x}, x\right)$ and its transpose, we obtain the symmetric matrix that begins
$$\left(
\begin{array}{ccccccc}
 1 & 1 & 1 & 1 & 1 & 1 & 1 \\
 1 & 7 & 4 & 1 & 7 & 4 & 1 \\
 1 & 4 & 34 & 1 & -23 & 61 & 1 \\
 1 & 1 & 1 & 163 & -80 & -242 & 568 \\
 1 & 7 & -23 & -80 & 898 & -752 & -1862 \\
 1 & 4 & 61 & -242 & -752 & 5515 & -5588 \\
 1 & 1 & 1 & 568 & -1862 & -5588 & 35884 \\
\end{array}
\right),$$ which has the same principal minor sequence.

We still have to prove that the resulting numbers $1, 2, 7, 42, 429, 7436, \ldots$ are the Robbins numbers. We do this as follows. We have
$$\frac{1+x+x^2}{(1-x)^3}=\cfrac{1}{1-4x+\cfrac{6x^2}{1-\frac{1}{2}x+\cfrac{\frac{3}{4}x^2}{1+\frac{1}{2}x}}}.$$ Thus we shall achieve the same result for any generating function of the form
$$\cfrac{1}{1-(4+s)x+\cfrac{6x^2}{1-(\frac{1}{2}+t)x+\cfrac{\frac{3}{4}x^2}{1+(\frac{1}{2}+t)x}}}.$$
The choice of $s=1$ and $t=-1$ gives us the generating function
$$\tilde{g}(x)=1+3x+3x^2.$$ In fact, this is the inverse binomial transform of $\frac{1+x+x^2}{(1-x)^3}$.
From this we can conclude that the numbers are indeed the Robbins numbers \cite{Gessel}.
\begin{proposition} The Robbins numbers $A_{n+1}$ are given by the Hankel transform of the revert transform of the centered triangle polygon numbers, scaled by $3^{\binom{n+1}{2}}$.
\end{proposition}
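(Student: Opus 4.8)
The plan is to transport the problem, through transforms that leave the Hankel transform invariant, from the centered triangle numbers $g(x)=\frac{1+x+x^2}{(1-x)^3}$ to the polynomial generating function $\tilde g(x)=1+3x+3x^2$, and then to finish with a Hankel-determinant evaluation of Gessel--Xin type.

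First I would record the continued-fraction expansion
$$\frac{1+x+x^2}{(1-x)^3}=\cfrac{1}{1-4x+\cfrac{6x^2}{1-\frac{1}{2}x+\cfrac{\frac{3}{4}x^2}{1+\frac{1}{2}x}}},$$
a routine computation, and note that it is the member $s=t=0$ of the parametrized family
$$\cfrac{1}{1-(4+s)x+\cfrac{6x^2}{1-(\frac{1}{2}+t)x+\cfrac{\frac{3}{4}x^2}{1+(\frac{1}{2}+t)x}}}$$
governed by the Proposition on revert transforms, all of whose members produce revert transforms with one common Hankel transform. A short computation shows that choosing $s=1$ and $t=-1$ collapses the fraction to $\tilde g(x)=1+3x+3x^2$. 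As an independent check of the same reduction, one verifies in a single substitution that $\frac{1}{1+x}\,g\!\left(\frac{x}{1+x}\right)=1+3x+3x^2$, so $\tilde g$ is the inverse binomial transform of $g$; since the revert transform carries the (inverse) binomial transform to the (inverse) invert transform, and invert transforms preserve the Hankel transform, the revert transforms of $g$ and of $\tilde g$ share one Hankel transform. Either route reduces the proposition to the single claim that the Hankel transform $h_n$ of the revert transform of $\tilde g$ satisfies $h_n=3^{\binom{n+1}{2}}A_{n+1}$.

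To attack $h_n$ I would apply the Proposition identifying the Hankel transform of a revert transform with a symmetric principal-minor sequence. Since $x\tilde g(x)-y\tilde g(y)=(x-y)\bigl(1+3(x+y)+3(x^2+xy+y^2)\bigr)$, the sequence $h_n$ is the principal-minor sequence of the symmetric matrix with generating function $\frac{1}{1+3(x+y)+3(x^2+xy+y^2)}$; equivalently $h_n=\bigl|b_{i+j}\bigr|_{0\le i,j\le n}$, where $b_n=1,-3,15,-90,594,\dots$ is the revert transform of $\tilde g$, a ternary-type sequence whose generating function $B$ satisfies $B=1-3xB^2-3x^2B^3$. The remaining, decisive task is the determinant evaluation $h_n=3^{\binom{n+1}{2}}A_{n+1}$, which I would carry out by the continued-fraction and nonintersecting-lattice-path methods that Gessel and Xin apply to Hankel determinants of ternary numbers, matching the outcome against $A_{n+1}=\prod_{k=0}^{n}\frac{(3k+1)!}{(n+1+k)!}$.

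The power $3^{\binom{n+1}{2}}$ is the telling feature, and isolating it cleanly is the crux. It is not a uniform row-and-column rescaling of the symmetric matrix, which would contribute the even power $3^{\,n(n+1)}$; rather, once the Heilermann coefficients $\beta_i$ of $B$ are determined, each carries exactly one factor of $3$, say $\beta_i=3\gamma_i$ (one checks $\beta_1=6$, $\beta_2=\frac{21}{4}$, and more structurally that the $r$-analogue $\beta_i(r)$ has a simple zero at $r=0$). The Heilermann formula then gives
$$h_n=\prod_{i=1}^{n}\beta_i^{\,n+1-i}=3^{\sum_{i=1}^{n}(n+1-i)}\prod_{i=1}^{n}\gamma_i^{\,n+1-i}=3^{\binom{n+1}{2}}\prod_{i=1}^{n}\gamma_i^{\,n+1-i},$$
so the entire content is the identity $\prod_{i=1}^{n}\gamma_i^{\,n+1-i}=A_{n+1}$, that is, that the $\gamma_i$ are the Heilermann coefficients of a continued fraction whose Hankel transform is the Robbins sequence. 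Establishing this closed form for the $\gamma_i$ (equivalently, the closed form of the scaled Hankel polynomial $h_n(r)/r^{\binom{n+1}{2}}$ evaluated at $r=3$) is exactly what resists a naive induction and forces the appeal to the Gessel--Xin evaluation.
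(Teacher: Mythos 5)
Your proposal is correct and takes essentially the same route as the paper: expand the centered-triangular generating function as a J-fraction (both you and the paper carry the same sign slip there --- the middle level should read $1+\frac{1}{2}x$, not $1-\frac{1}{2}x$, as one checks by reconstituting the fraction), pass Hankel-invariantly to $\tilde g(x)=1+3x+3x^2$ via the parametrized continued-fraction family and/or the inverse binomial transform, and settle the resulting evaluation by appeal to Gessel--Xin, which is precisely what the paper's citation of \cite{Gessel} does. Your closing Heilermann analysis ($\beta_i=3\gamma_i$, hence $h_n=3^{\binom{n+1}{2}}\prod_{i=1}^{n}\gamma_i^{\,n+1-i}$) simply makes explicit what the paper leaves implicit in that citation, namely that the rescaling $x\mapsto x/\sqrt{3}$ turns $1+3x+3x^2$ into Gessel--Xin's $1+\sqrt{3}x+x^2$, whose revert transform has Hankel transform $A_{n+1}$.
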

\section{Heptagons and nonagons}
To each regular polygon there are a number of naturally associated polynomials. For instance, if we take the side length of a regular heptagon to be $1$, and if we designate by $\rho$ and $\sigma$ the lengths of two distinct diagonals with $\sigma > \rho$, then we have
$$\rho^3-\rho^2-2 \rho+1 =0,$$ with $\rho=2 \cos\left(\frac{\pi}{7}\right)$, and $\sigma=4 \cos^2\left(\frac{\pi}{7}\right)$. Then the equation
$$x^3-x^2-2x+1=0$$ has roots $\rho, \frac{1}{\sigma}, -\frac{\sigma}{\rho}$, and the equation
$$x^3-2x^2-x+1=0$$ has roots $\sigma, \frac{1}{\rho}, -\frac{\rho}{\sigma}$.
Thus the polynomials $1-2x-x^2+x^3$ and $1-x-2x^2+x^3$ are naturally associated with the heptagon. Following \cite{Heptagon}, to each regular $n$-gon we can associate a polynomial which has $2 \cos\left(\frac{\pi}{n}\right)$ as a root. For odd $n$, this polynomial is
$$P_n(x)=\sum_{i=0}^{\lfloor \frac{n}{2} \rfloor}(-1)^i\binom{k-i}{i}x^{k-2i} - \sum_{i=0}^{\lfloor \frac{n}{2} \rfloor}(-1)^i \binom{k-i-1}{i}x^{k-(2i+1)},$$ with $P_0(x)=1$, where $k=\frac{n-1}{2}$.
These polynomials are defined by the coefficient array that is given by the Riordan array
$$\left(\frac{1-x}{1+x^2}, \frac{x}{1+x^2}\right),$$ which begins
$$\left(
\begin{array}{ccccccc}
 1 & 0 & 0 & 0 & 0 & 0 & 0 \\
 -1 & 1 & 0 & 0 & 0 & 0 & 0 \\
 -1 & -1 & 1 & 0 & 0 & 0 & 0 \\
 1 & -2 & -1 & 1 & 0 & 0 & 0 \\
 1 & 2 & -3 & -1 & 1 & 0 & 0 \\
 -1 & 3 & 3 & -4 & -1 & 1 & 0 \\
 -1 & -3 & 6 & 4 & -5 & -1 & 1 \\
\end{array}
\right).$$
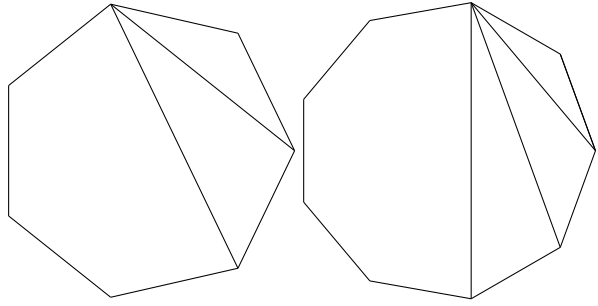
\begin{figure}
\begin{center}
\begin{tikzpicture}[scale=2]
\begin{scope}

    \draw (0 :1 cm) -- (102.85714: 1 cm);
    \draw (102.85714: 1 cm) -- (308.5714: 1 cm);
    \foreach \x in {0,51.4286,...,360} {
        \draw (\x:1 cm) -- (\x+51.4286:1 cm);
        \x = \x+51.4286;
    }
\end{scope}
\begin{scope}[xshift=2cm]
    \foreach \x in {0,40,...,360} {
        \draw (\x:1 cm) -- (\x+40:1 cm);
        \x = \x+40;
    }
    \draw (0 :1 cm) -- (80: 1 cm);
    \draw (80: 1 cm) -- (320: 1 cm);
    \draw (80: 1 cm) -- (280: 1 cm);
\end{scope}
\end{tikzpicture}
\end{center}
\caption{A heptagon and a nonagon, showing diagonals}
\end{figure}

Thus for the heptagon we have the polynomial
$$ 1-2x-x^2+x^3,$$ and for the nonagon we have the polynomial
$$1+2x-3x^2-x^3+x^4.$$
We use these polygon polynomials to form two generating functions, namely
$$\frac{1-x}{1-2x-x^2+x^3},$$ and
$$\frac{1-x^2}{1+2x-3x^2-x^3+x^4}.$$
The generating function $\frac{1-x}{1-2x-x^2+x^3}$ expands to give the sequence \seqnum{A077998} which begins
$$1, 1, 3, 6, 14, 31, 70, 157, 353, 793, 1782,\ldots.$$ This sequence counts, for instance, the number  of compositions of $n$ if there are two kinds of part $2$. This sequence corresponds to the diagonal sums of the Riordan array
$$\left(\frac{1-x}{1-2x}, \frac{x(1-x)}{1-2x}\right).$$
We have
$$\frac{1-x}{1-2x-x^2+x^3}=\cfrac{1}{1-x-\cfrac{2x^2}{1-\frac{1}{2}x-\cfrac{\frac{1}{4}x^2}{1-\frac{1}{2}x}}}.$$
The generating function $\frac{1-x^2}{1+2x-3x^2-x^3+x^4}=\frac{1+x}{1-3x+x^3}$ expands to give the sequence that begins
$$1, -2, 6, -17, 49, -141, 406, -1169, 3366, -9692,\ldots.$$ This is an alternating sign version of \seqnum{A052536}, the number of compositions of $n$ when parts $1$ and $2$ are of two kinds. We have
$$\frac{1-x^2}{1+2x-3x^2-x^3+x^4}=\cfrac{1}{1+2x-\cfrac{2x^2}{1+\frac{1}{2}x-\cfrac{\frac{1}{4}x^2}{1+\frac{1}{2}x}}}.$$
We therefore know that the revert transforms of these two sequences will have the same Hankel transform. The revert transforms begin, respectively,
$$1, -1, -1, 4, 0, -17, 16, 68, -146, -221, 1003, 273, -5939,\ldots,$$ and
$$1, 2, 2, -3, -17, -27, 30, 248, 467, -417, -4387, -9072, 6792,\ldots.$$
Both these sequences then have Hankel transform sequences that begin
$$1, -2, -7, 42, 429, -7436, -218348, 10850216, 911835460,\ldots.$$
It follows that the principal minor sequence of the matrix with generating function
$$\frac{g(x)}{1-x} \frac{x-y}{xg(x)-yg(x)} \frac{g(y)}{1-y},$$ with
$g(x)=\frac{1-x}{1-2x-x^2+x^3}$, will give the same sequence. This array begins
$$\left(
\begin{array}{ccccccc}
 1 & 1 & 1 & 1 & 1 & 1 & 1 \\
 1 & -1 & -2 & -3 & -4 & -5 & -6 \\
 1 & -2 & 0 & 2 & 5 & 9 & 14 \\
 1 & -3 & 2 & 1 & -1 & -6 & -15 \\
 1 & -4 & 5 & -1 & -1 & 0 & 6 \\
 1 & -5 & 9 & -6 & 0 & 0 & 0 \\
 1 & -6 & 14 & -15 & 6 & 0 & 1 \\
\end{array}
\right).$$
For the generating function $\frac{1-x^2}{1+2x-3x^2-x^3+x^4}$, we obtain the matrix that begins
$$\left(
\begin{array}{ccccccc}
 1 & 1 & 1 & 1 & 1 & 1 & 1 \\
 1 & -1 & 0 & -1 & 0 & -1 & 0 \\
 1 & 0 & 4 & 0 & 5 & -1 & 6 \\
 1 & -1 & 0 & -7 & 3 & -12 & 9 \\
 1 & 0 & 5 & 3 & 15 & -6 & 30 \\
 1 & -1 & -1 & -12 & -6 & -28 & 14 \\
 1 & 0 & 6 & 9 & 30 & 14 & 57 \\
\end{array}
\right).$$
\begin{example} In this example, we consider the generating function
$$g(x)=\frac{1+ix}{1+3ix-(ix)^3}.$$
The matrix with generating function
$$\frac{g(x)}{1-x} \frac{g(y)}{1-y} \frac{x-y}{xg(x)-yg(x)}$$ then begins
$$\left(
\begin{array}{ccccccc}
 1 & 1 & 1 & 1 & 1 & 1 & 1 \\
 1 & 3 & 3-i & 2-i & 2 & 3 & 3-i \\
 1 & 3-i & 6-2 i & 5-5 i & 1-4 i & i+2 & 8 \\
 1 & 2-i & 5-5 i & 7-8 i & 3-12 i & -7 i-5 & 6 i+1 \\
 1 & 2 & 1-4 i & 3-12 i & 1-16 i & -17 i-7 & -4 i-16 \\
 1 & 3 & i+2 & -7 i-5 & -17 i-7 & -18 i-14 & -11 i-23 \\
 1 & 3-i & 8 & 6 i+1 & -4 i-16 & -11 i-23 & -4 i-31 \\
\end{array}
\right).$$
The principal minor sequence of this matrix begins
$$1, 2, 7, 42, 429, 7436, 218348, \ldots.$$
\end{example}
Some sequences in this section, and closely related sequences, have been studied previously under the name of so-called ``quasi-Fibonacci'' sequences \cite{Wang, Witula}.

\section{A special matrix}
It is well known that the powers of the matrix $\left(
\begin{array}{cc}
 1 & 1 \\
 1 & 0 \\
\end{array}
\right)$ generate the Fibonacci numbers \seqnum{A000045}. This matrix is also closely connected to the theory of partial fractions \cite{CF}. The matrices
$$\left(
\begin{array}{ccc}
 1 & 1 & 1 \\
 1 & 1 & 0 \\
 1 & 0 & 0 \\
\end{array}
\right)^n $$ generate the sequence \seqnum{A077998}
$$1, 1, 3, 6, 14, 31, 70, 157, 353, 793, 1782,\ldots,$$ in the $(1,1)$ position. This sequence has generating function
$$\frac{1-x}{1-2x-x^2+x^3}=\cfrac{1}{1-x-\cfrac{2x^2}{1-\frac{1}{2}x-\cfrac{\frac{1}{4}x^2}{1-\frac{1}{2}x}}}.$$ The revert transform of this sequence, which begins
$$1, -1, -1, 4, 0, -17, 16, 68, -146, -221, 1003,\ldots,$$ has a Hankel transform that begins
$$1, -2, -7, 42, 429, -7436,\ldots.$$
We now parameterize this situation, by looking at the polynomial sequence that is generated in the $(1,1)$-position in the powers of the matrix
$$\left(
\begin{array}{ccc}
 r & 1 & 1 \\
 1 & 1 & 0 \\
 1 & 0 & 0 \\
\end{array}
\right).$$
This polynomial sequence $P_n(r)$ begins
$$1, r, r^2 + 2, r^3 + 4r + 1, r^4 + 6r^2 + 2r + 5, r^5 + 8r^3 + 3r^2 + 14r + 5, \ldots,$$ with coefficient array that begins
$$\left(
\begin{array}{ccccccc}
 1 & 0 & 0 & 0 & 0 & 0 & 0 \\
 0 & 1 & 0 & 0 & 0 & 0 & 0 \\
 2 & 0 & 1 & 0 & 0 & 0 & 0 \\
 1 & 4 & 0 & 1 & 0 & 0 & 0 \\
 5 & 2 & 6 & 0 & 1 & 0 & 0 \\
 5 & 14 & 3 & 8 & 0 & 1 & 0 \\
 14 & 14 & 27 & 4 & 10 & 0 & 1 \\
\end{array}
\right).$$ This is the Riordan array
$$\left(\frac{1-x}{1-x-2x^2+x^3}, \frac{x(1-x)}{1-x-2x^2+x^3}\right).$$
The polynomial sequence has generating function
$$\frac{1-x}{1-(r+1)x+(r-2)x^2+x^3}=\cfrac{1}{1-rx-\cfrac{2x^2}{1-\frac{1}{2}x-\cfrac{\frac{1}{4}x^2}{1-\frac{1}{2}x}}}.$$
This guarantees that the revert transform of the polynomial sequence will have Hankel transform
$$1, -2, -7, 42, 429, -7436,\ldots,$$ for any value of $r$.
The revert transform of the polynomial sequence $\bar{P}_n(r)$ begins
$$1, -r, r^2 - 2, - r^3 + 6r - 1, r^4 - 12r^2 + 4r + 7, - r^5 + 20r^3 - 10r^2 - 35r + 9,\ldots.$$
We then have
$$\bar{P}_n(r)=\frac{1}{n+1}[x^n] \frac{1}{\left(\frac{1-x}{1-(r+1)x-(r-2)x^2+x^3}\right)^{n+1}}.$$
This polynomial sequence has a coefficient array that begins
$$\left(
\begin{array}{ccccccc}
 1 & 0 & 0 & 0 & 0 & 0 & 0 \\
 0 & -1 & 0 & 0 & 0 & 0 & 0 \\
 -2 & 0 & 1 & 0 & 0 & 0 & 0 \\
 -1 & 6 & 0 & -1 & 0 & 0 & 0 \\
 7 & 4 & -12 & 0 & 1 & 0 & 0 \\
 9 & -35 & -10 & 20 & 0 & -1 & 0 \\
 -26 & -54 & 105 & 20 & -30 & 0 & 1 \\
\end{array}
\right).$$
This is the exponential Riordan array $[g_e(x), -x]$, where $g_e(x)$ is the exponential generating function of the sequence
$$1, 0, -2, -1, 7, 9, -26, -64, 83, 407, -115,\ldots$$ which is the value of the revert polynomial for $r=0$.
The inverse Riordan array
$$(u,v)=\left(\frac{1-x}{1-x-2x^2+x^3}, \frac{x(1-x)}{1-x-2x^2+x^3}\right)^{-1}$$ also generates sequences whose Hankel transforms are given by
$$1, -2, -7, 42, 429, -7436,\ldots.$$
This is because
\begin{enumerate}
\item It is a Bell matrix
\item Its first column is the revert transform of $[x^n] \frac{1-x}{1-x-2x^2+x^3}$, which has the required Hankel transform
\item The sequences with generating function $(u, v)\cdot \frac{1}{1-rx}$ will then be INVERT$(r)$ transforms of the first column, and hence will have the same Hankel transforms.
\end{enumerate}
It is of interest to calculate the polynomial sequence
$$Q_n(r)=[x^n] \frac{1}{\left(\frac{1-x}{1-(r+1)x-(r-2)x^2+x^3}\right)^n}.$$ This begins
$$1, -r, r^2 - 4, 3(4r - 1) - r^3, r^4 - 24r^2 + 12r + 20, 5(8r^3 - 6r^2 - 20r + 7) - r^5,\ldots,$$ with a coefficient array that begins
$$\left(
\begin{array}{ccccccc}
 1 & 0 & 0 & 0 & 0 & 0 & 0 \\
 0 & -1 & 0 & 0 & 0 & 0 & 0 \\
 -4 & 0 & 1 & 0 & 0 & 0 & 0 \\
 -3 & 12 & 0 & -1 & 0 & 0 & 0 \\
 20 & 12 & -24 & 0 & 1 & 0 & 0 \\
 35 & -100 & -30 & 40 & 0 & -1 & 0 \\
 -91 & -210 & 300 & 60 & -60 & 0 & 1 \\
\end{array}
\right).$$
Again, this is an exponential Riordan array $[\tilde{g}_e, -x]$, where $\tilde{g}_e$ is the exponential generating function of the sequence
$$1,0,-4,-3,20,35,-91,\ldots,$$ which has generating function $\frac{x f'(x)}{f(x)}$ where $f(x)$ is the revert transform of $\frac{1-x}{1-x-2x^2+x^3}$.
The Hankel transform of $Q_n(r)$ evaluates to the sequence
$$1, -4, -25, 256, 4356, -123904, -5909761, 473497600, 63799687396,  \ldots.$$
Taking the absolute value and then the square root, we obtain the sequence
$$1, 2, 5, 16, 66, 352, 2431, 21760, 252586, 3803648, 74327145,\ldots.$$
For the displayed terms, this coincides with \seqnum{A005157}, which counts the number of totally symmetric plane partitions that fit in an $n \times n \times n$ box. We conjecture that these sequences are identical.

\section{The sequence $1,-2,-7,42,429,\ldots$ as a Hankel transform}
We present a table which lists sequences in the OEIS whose revert transform has a Hankel transform given by the sequence $1,-2,-7,42,429,\ldots.$ Each of these sequences can be generated by a ``small matrix'' by following the powers of that matrix. We indicate in bold the element of the matrix that is the generator.
\begin{center}
\begin{tabular} {|c|c|c|c|c|}\hline
OEIS & GF & matrix & revert transform & Hankel \\ \hline
\seqnum{A052536} & $\frac{1-x}{1-3x+x^3}$ & $\left(
\begin{array}{ccc}
 \mathbf{2} & 1 & 1 \\
 1 & 1 & 0 \\
 1 & 0 & 0 \\
\end{array}
\right)$ & $1,1,-1,-6,-8,-15$ & $1,-2,-7,42,429$ \\ \hline
\seqnum{A052547} & $\frac{1-x}{1-x-2x^2+x^3}$ & $\left(
\begin{array}{ccc}
 \mathbf{0} & 1 & 1 \\
 1 & 0 & 0 \\
 1 & 0 & 1 \\
\end{array}
\right)$ & $1,0,-2,-1,7,9$ & $1,-2,-7,42,429$ \\ \hline
\seqnum{A052941} & $\frac{1-x}{1-4x+x^2+x^3}$ & $\left(
\begin{array}{ccc}
 1 & 1 & 2 \\
 1 & 2 & 1 \\
 \mathbf{1} & 1 & 1 \\
\end{array}
\right)$ & $1,-3,7,-10,-8,111$ & $1,-2,-7,42,429$ \\ \hline
\seqnum{A077998} & $\frac{1-x}{1-2x-x^2+x^3}$ & $\left(
\begin{array}{ccc}
 \mathbf{1} & 1 & 1 \\
 1 & 1 & 0 \\
 1 & 0 & 0 \\
\end{array}
\right)$ & $1,-1,-1,4,0,-17,16 $ & $1,-2,-7,42,429$ \\ \hline
\seqnum{A052975} & $\frac{1-3x+2x^2}{1-5x+6x^2-x^3}$ & $\left(
\begin{array}{ccc}
 1 & 1 & 0 \\
 1 & \mathbf{2} & 1 \\
 0 & 1 & 2 \\
\end{array}
\right)$ & $1,-2,2,1,-5,-1,22$ & $1,-2,-7,42,429$ \\ \hline

\seqnum{A121449} & $ \frac{1-3x+2x^2}{1-4x+3x^2+x^3}$ & $\left(
\begin{array}{ccc}
 1 & 1 & 0 \\
 1 & \mathbf{1} & 1 \\
 0 & 1 & 2 \\
\end{array}
\right)$ & $1,-1,-1,2,4,-5,-20$ &  $1,-2,-7,42,429$ \\ \hline
\seqnum{A122368} & $\frac{1-3x+2x^2}{1-6x+9x^2-3x^3}$ & $\left(
\begin{array}{ccc}
 1 & 1 & 0 \\
 1 & \mathbf{3} & 1 \\
 0 & 1 & 2 \\
\end{array}
\right)$ & $1, -3, 7, -12, 12, 3, -24$ & $1,-2,-7,42,429$ \\ \hline
\seqnum{A188022} & $\frac{1+x}{1-3x^2-x^3}$ & $\begin{scriptsize}\left(
\begin{array}{cccc}
 0 & 1 & 0 & 0 \\
 1 & 0 & 1 & 0 \\
 0 & 1 & 0 & \mathbf{1} \\
 0 & 0 & 1 & 1 \\
\end{array}
\right)\end{scriptsize} $ & $1,-1,-1,6,-8,-15,84$ & $1,-2,-7,42,429$ \\ \hline
\end{tabular}
\end{center}
\begin{example} The generating function of \seqnum{A077998}, $\frac{1-x}{1-2x-x^2+x^3}$ is the generating function of the diagonal sums of the Riordan array $\left(\frac{1-x}{1-2x}, \frac{x(1-x)}{1-2x}\right)$. This sequence counts Motzkin $(n+2)$ paths with no level steps at ground level, and whose height is less than $3$. It also counts the number of compositions of $n$ if there are two kinds of part $2$. The initial column and the row sums of the Riordan array of Bell type
$$\left(\frac{1-x}{1-2x-x^2+x^3}, \frac{x(1-x)}{1-2x-x^2+x^3}\right)^{-1}$$ will have a Hankel transform that begins $1,-2,-7, 42,429,\ldots$.
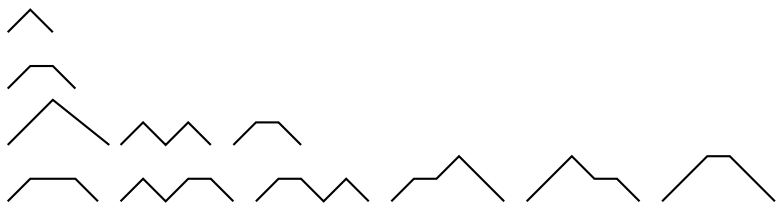
\begin{figure}
\begin{center}
\begin{tikzpicture}[scale=0.15]
\draw[black, thick] (0,0)--(2,2)--(6,2)--(8,0);
\draw[black, thick] (10,0)--(12,2)--(14,0)--(16,2)--(18,2)--(20,0);
\draw[black, thick] (22,0)--(24,2)--(24,2)--(26,2)--(28,0)--(30,2)--(32,0);
\draw[black, thick] (34,0)--(36,2)--(38,2)--(40,4)--(44,0);
\draw[black, thick] (46,0)--(50,4)--(52,2)--(54,2)--(56,0);
\draw[black, thick] (58,0)--(62,4)--(64,4)--(68,0);

\draw[black, thick] (0,15)--(2,17)--(4,15);

\draw[black, thick] (0,10)--(2,12) --(4,12)--(6,10);

\draw[black, thick] (0,5)--(4,9)--(9,5);
\draw[black, thick] (10,5)--(12,7)--(14,5)--(16,7)--(18,5);
\draw[black, thick] (20,5)--(22,7)--(24,7)--(26,5);

\filldraw[black] (0,25) circle (2pt);
\end{tikzpicture}
\caption{Motzkin paths of height less than $3$ with no flat steps at ground level}
\end{center}
\end{figure}
\end{example}
\begin{example} The log revert transform of \seqnum{A052547}, given by
$$[x^n] \left(\frac{1-x-2x^2+x^3}{1-x}\right)^n$$ has a Hankel transform that begins
$$1, -4, -25, 256, 4356, -123904,\ldots.$$
We conjecture that this is $(-1)^{\binom{n+1}{2}}$ times the square of \seqnum{A005157}, which counts the number of totally symmetric plane partitions that fit in an $n \times n \times n$ box.

\end{example}
\begin{example} We arrive at interesting results if we look at the Hankel transforms of the initial column (IC) and the row sums (RS) of the Riordan arrays defined by
$$ t_{n,k}=[x^{n-k}] \frac{1}{g(x)^n},$$ where the generating function $g(x)$ is any of the generating functions of this section. We are here using a central description of a Riordan array \cite{Central}. We document the results in the following table.
\begin{center}
\begin{tabular} {|c|c|c|c|}\hline
OEIS & GF & Hankel of IC & Hankel of RS \\\hline
\seqnum{A052536} & $\frac{1-x}{1-3x+x^3}$ & $1, -4, -25, 256, 4356, -123904$ & $1, -4, -25, 256, 4356, -123904$ \\ \hline
\seqnum{A052547} & $\frac{1-x}{1-x-2x^2+x^3}$  & $1, -4, -25, 256, 4356, -123904$ & $1, -4, -25, 256, 4356, -123904$ \\ \hline
\seqnum{A052941} & $\frac{1-x}{1-4x+x^2+x^3}$   & $1, -4, -25, 256, 4356, -123904$ & $1, -4, -25, 256, 4356, -123904$ \\ \hline
\seqnum{A077998} & $\frac{1-x}{1-2x-x^2+x^3}$  & $1, -4, -25, 256, 4356, -123904 $ & $1, -4, -25, 256, 4356, -123904$ \\ \hline
\seqnum{A052975} & $\frac{1-3x+2x^2}{1-5x+6x^2-x^3}$  & $1, -4, -33, 432, 9504, -349800$ & $1, -4, -25, 256, 4356, -123904$ \\ \hline

\seqnum{A121449} & $ \frac{1-3x+2x^2}{1-4x+3x^2+x^3}$  & $1, -4, -33, 432, 9504, -349800$ &  $1, -4, -25, 256, 4356, -123904$ \\ \hline
\seqnum{A122368} & $\frac{1-3x+2x^2}{1-6x+9x^2-3x^3}$ &  $1, -4, -33, 432, 9504, -349800$ & $1, -4, -25, 256, 4356, -123904$ \\ \hline
\seqnum{A188022} & $\frac{1+x}{1-3x^2-x^3}$  & $1, -4, -25, 256, 4356, -123904$ & $1, -4, -33, 432, 9504, -349800$ \\ \hline
\end{tabular}
\end{center}
Where the two Hankel transforms agree, this is because the initial column sequence and the row sum sequence are related by an alternating sign transform followed by an $r$-binomial transform, for suitable integer values of $r$.

We are not aware at this time of what is counted by the sequence $1, 4, 33, 432, 9504,\ldots$.
\end{example}
\begin{example} We consider the generating function
$$g_{a,c}(x)=\frac{4+4x+(a^2+1)x^2}{4+4cx+(9+4ac-3a^2)x^2-(a^3-a^2c-3a-c)x^3}.$$
This can be expressed as the continued fraction
$$\cfrac{1}{1-(a-c)x+\cfrac{2x^2}{1+\frac{a}{2}x+\cfrac{\frac{x^2}{2}}{1+\frac{a}{2}x}}}.$$
We infer from this form that the Hankel transform of the revert transform will be the Robbins numbers $A_{n+1}$. For instance, for $(a,c)=(0,0)$ we obtain the generating function
$$g_{0,0}(x)=\frac{4+x^2}{4+9x^2}.$$ The revert transform of the expansion of this generating function begins
$$1, 0, 2, 0, \frac{15}{2}, 0,\frac{273}{8}, 0,\frac{5471}{32} , 0, \frac{116193}{128},\ldots.$$
As expected, its Hankel transform is $1,2,7,42,\ldots$.
The generating function $$g_{0,0}(2x)=\frac{1+x^2}{1+9x^2}$$ leads to the revert transform that begins
$$1, 0, 8, 0, 120, 0, 2184, 0, 43768, 0, 929544,\ldots.$$
This sequence will then have its Hankel transform given by $[4^{\binom{n+1}{2}}](1,2,7,42,\ldots]$.

In like manner, taking $(a,c)=(1,-1)$, we get the generating function
$$g_{1,-1}(x)=\frac{2+2x+x^2}{2-2x+x^2}.$$
The revert transform of the expansion of $g_{1,-1}(x)$ begins
$$1, -2, 6, -21, 80, - \frac{643}{2},\frac{2681}{2}, - \frac{22967}{4}, 25104, - \frac{892409}{8}, \ldots.$$
Its Hankel transform is $1,2,7,42,\ldots$.
The generating function $$g_{1,-1}(2x)=\frac{1+2x+2x^2}{1-2x+2x^2}$$ will have an expansion whose revert transform begins
$$1, -4, 24, -168, 1280, -10288, 85792, -734944, 6426624, -57114176, \ldots.$$
This sequence will then have its Hankel transform given by $[4^{\binom{n+1}{2}}](1,2,7,42,\ldots]$.
\end{example}
\begin{example} We consider the generating function
$$f_{b,c}(x)=\frac{1+\sqrt{4b-1}x+bx^2}{1+cx+(c\sqrt{4b-1}-3b+3)x^2+\frac{1}{4}(bc+(1-b)\sqrt{4b-1})x^3}.$$
This generating function may be expressed as the continued fraction
$$\cfrac{1}{1-(\sqrt{4b-1}-c)x+\cfrac{2x^2}{1+\frac{\sqrt{4b-1}}{2}x+\cfrac{\frac{x^2}{4}}{1+\frac{\sqrt{4b-1}}{2} x}}}.$$
Again, this form assures us that the revert transform of the expansion of this generating function will have a Hankel transform given by the Robbins numbers $A_{n+1}$.
This is particularly the case \cite{Gessel} for
$$f_{1,0}(x)=1+\sqrt{3}x+x^2.$$
Other interesting generating functions are
\begin{align*}
f_{0,0}(x)&=\frac{1+ix}{1+3x^2+ix^3}\\
f_{1,1}(x)&=\frac{1+\sqrt{3}x+x^2}{1+x+\sqrt{3}x+x^3}\\
f_{1,-1}(x)&=\frac{1+\sqrt{3}x+x^2}{1-x-\sqrt{3}-x^3}\\
f_{1,-3}(x)&=\frac{1+\sqrt{3}x+x^2}{1-3x-3\sqrt{3}x^2-3x^3}.\end{align*}
We take a closer look at $f_{0,0}$, which can be expressed as
$$f_{0,0}(x)=\frac{1+ix}{1+3x^2+ix^3}=\frac{1+3x^2+4x^4}{1+6x^2+9x^4+x^6}+\frac{i(1+2x^2)x}{1+6x^2+9x^4+x^6}.$$
The expansion of $f_{0,0}(x)$, then begins
$$1, i, -3, - 4i, 10, 15i, -34, - 55i, 117, 199i, -406,\ldots,$$ or
$$1, 0, -3, 0, 10, 0, -34, 0, 117, 0 ,\ldots + i(0, 1, 0, -4, 0, 15, 0, -55, 0, \ldots).$$
The summands here are aerations of alternating sign versions of \seqnum{A094832} and \seqnum{A094833}.
The corresponding real positive sequence \seqnum{A188022} has generating function $\frac{1+x}{1-3x^2-x^3}$ and begins
$$1, 1, 3, 4, 10, 15, 34, 55, 117, 199, 406, 714,\ldots.$$
\end{example}
\begin{example} \textbf{The path graphs $P_3$ and $P_6$ and the Robbins numbers.}
We consider the path graph $P_3$,  to which a loop is added at the end. This modified path graph will then have its adjacency matrix given by
$$\left(
\begin{array}{ccc}
 0 & 1 & 0 \\
 1 & 0 & 1 \\
 0 & 1 & 1 \\
\end{array}
\right).$$
Taking the powers of this matrix and following the $(2,2)$-element, we obtain the sequence \seqnum{A052547} that begin
$$ 1, 0, 2, 1, 5, 5, 14, 19, 42, 66, 131,\ldots,$$ with generating function
$$\frac{1-x}{1-x-2x^2+x^3}.$$
Complexifying this as in the last example, we consider the sequence that begins
$$ 1, 0, -2, -i, 5, 5i, -14, - 19i, 42, 66i, -131,\ldots.$$
This sequence has its generating function given by
$$\frac{i+x}{i+x+2ix^2+x^3}=\cfrac{1}{1+\cfrac{x^2}{1-\frac{i}{2}x+\cfrac{\frac{x^2}{4}}{1-\frac{i}{2}x}}}.$$
Thus the expansion of the revert transform of the above sequence, which begins
$$1, 0, 2, i, 7, 9i, 26, 64i, 83, 407i, 115,\ldots,$$ will have the Robbins numbers $A_{n+1}$ as Hankel transform.

We can express the generating function above as
$$\frac{i+x}{i+x+2ix^2+x^3}=\frac{1+3x^2+x^4}{1+5x^2+6x^4+x^6}-\frac{ix^3}{1+5x^2+6x^4+x^6}.$$
We consider the adjacency matrix of the path graph $P_6$. This is given by
$$\left(
\begin{array}{cccccc}
 0 & 1 & 0 & 0 & 0 & 0 \\
 1 & 0 & 1 & 0 & 0 & 0 \\
 0 & 1 & 0 & 1 & 0 & 0 \\
 0 & 0 & 1 & 0 & 1 & 0 \\
 0 & 0 & 0 & 1 & 0 & 1 \\
 0 & 0 & 0 & 0 & 1 & 0 \\
\end{array}
\right).$$
Taking powers and following the $(1,2)$ position, we find that paths from vertex $1$ to vertex $2$ are counted by the sequence
$$0, 1, 0, 2, 0, 5, 0, 14, 0, 42, 0, 131, 0,\ldots,$$ with generating function
$$\frac{x(1-3x^2+x^4)}{1-5x^2+6x^4-x^6}.$$ This is (essentially) an aeration of \seqnum{A080937}.

If we now follow the $(1,6)$ position, we find that paths from vertex $1$ to vertex $6$ are counted by the sequence that begins
$$0, 0, 0, 0, 0, 1, 0, 5, 0, 19, 0, 66, 0, 221, 0, 728, 0, 2380, 0, 7753, 0,\ldots.$$
This is essentially an aeration of \seqnum{A005021}. The aerated sequence has generating function
$\frac{x^5}{1-5x^2+6x^4-x^6}$.
\end{example}

\section{The sequence $1,1,2,6,33,286,\ldots$ as a Hankel transform}
The sequence \seqnum{A005161}, which begins
$$1, 1, 1, 2, 6, 33, 286, 4420, 109820, 4799134, 340879665,\ldots,$$
counts the number of alternating sign $(2n+1) \times (2n+1)$ matrices that are symmetric with respect to both horizontal and vertical axes.

In this section, we present a table that documents sequences and their revert transforms, where the listed revert transforms have as their Hankel transform the sequence $1,1,2,6,33,286,\ldots$. Where a sequence is not listed in the OEIS, we give some starting terms. The superscript $^a$ indicates an aerated sequence.
\begin{center}
\begin{tabular} {|c|c|c|c|c|}\hline
OEIS & GF  & revert transform & OEIS & Hankel \\ \hline
$1,0,-1,0,0,0,0$ & $1-x^2$ & $1,0,1,0,3,0,12$ & \seqnum{A001764}$^a$ & $1,1,2,6,33,286$ \\ \hline
$(-1)^n$\seqnum{A080956} & $\frac{1+2x}{(1+x)^3}$ & $1,1,2,3,7,12$ & \seqnum{A047749}$_{n+1}$ &  $1,1,2,6,33,286$ \\ \hline
\seqnum{A122100} & $\frac{1-4x+3x^2}{1-3x+x^3}$ & $1,2,2,6,22,90$ & \seqnum{A049126} & $1,1,2,6,33,286$ \\ \hline
\seqnum{A104769}$(n+2)$ & $\frac{1-x^2}{1+x-x^2}$ & $1,1,2,4,10,26$  & \seqnum{A049130} & $1,1,2,6,33,286$ \\ \hline
$1,2,3,4,4,2$ & $\frac{1-x^2}{1-2x+2x^3}$ & $1,-2,5,14,43$ & \seqnum{A088927} & $1,1,2,6,33,286$ \\ \hline
\seqnum{A080956} & $\frac{1-2x}{(1-x)^3}$ & $1,-1,2,-3,7,-12$ & \seqnum{A134565} & $1,1,2,6,33,286$ \\ \hline
$1,2,3,5,7,12$ & $\frac{1+2x}{1-3x^2+x^3}$ & $1,-2,5,-15,52$ &  & $1,1,2,6,33,286$ \\ \hline
$1,-3,8,-21,54$ & $\frac{1-x^2}{1+3x-3x^3}$ & $1,3,10,36,138$ &  & $1,1,2,6,33,286$ \\ \hline
$1,-1,0,0,-1,-1,-2$ & $\frac{1-2x}{1-x-x^2-x^3}$ & $1,1,2,5,15,50$ &  & $1,1,2,6,33,286$ \\ \hline
$1,-2,3,-5,7,-12$ & $\frac{1-2x}{1-3x^2-x^3}$ & $1,2,5,15,52$ &  & $1,1,2,6,33,286$ \\ \hline
\seqnum{A154272} & $1+x^2$ & $1, 0, -1, 0, 3, 0, -12$ & \seqnum{A001764}$^b$ & $1, -1, -2, 6, 33, -286$ \\\hline
\end{tabular}
\end{center}
We note that the Hankel transform of \seqnum{A047749} begins
$$1, 0, -1, 0, 9, 0, -676, 0, 417316, 0, -2105433225,\ldots.$$
Taking the square root of the absolute value of these terms, we obtain
$$1, 0, 1, 0, 3, 0, 26, 0, 646, 0, 45885,\ldots,$$ an aerated version of  \seqnum{A005156}.

In order to construct a symmetric matrix whose principal minor sequence is given by $1,1,2,6,33,286,\ldots$, we take $f(x)=1-x^2$ and we expand the bivariate generating function
$$\frac{f(x)}{1-x} \frac{x-y}{xf(x)-yf(y)} \frac{f(y)}{1-y}=\frac{(1+x)(1+y)}{1-x^2-xy-y^2}$$ to get the number square that begins
$$\left(
\begin{array}{ccccccc}
 1 & 1 & 1 & 1 & 1 & 1 & 1 \\
 1 & 2 & 2 & 3 & 3 & 4 & 4 \\
 1 & 2 & 4 & 5 & 8 & 9 & 13 \\
 1 & 3 & 5 & 10 & 13 & 22 & 26 \\
 1 & 3 & 8 & 13 & 26 & 35 & 61 \\
 1 & 4 & 9 & 22 & 35 & 70 & 96 \\
 1 & 4 & 13 & 26 & 61 & 96 & 192 \\
\end{array}
\right).$$
This can be built up from the embedded triangle that begins
$$\left(
\begin{array}{cccccccc}
 1 & 0 & 0 & 0 & 0 & 0 & 0 & 0 \\
 2 & 1 & 0 & 0 & 0 & 0 & 0 & 0 \\
 4 & 2 & 1 & 0 & 0 & 0 & 0 & 0 \\
 10 & 5 & 3 & 1 & 0 & 0 & 0 & 0 \\
 26 & 13 & 8 & 3 & 1 & 0 & 0 & 0 \\
 70 & 35 & 22 & 9 & 4 & 1 & 0 & 0 \\
 192 & 96 & 61 & 26 & 13 & 4 & 1 & 0 \\
 534 & 267 & 171 & 75 & 40 & 14 & 5 & 1 \\
\end{array}
\right)$$ by the symmetrization process already described. This matrix constitutes an interleaving of the two Riordan arrays
$$\left(\frac{1-x}{1+x}, \frac{x}{1+x+x^2}\right)^{-1}$$ and
$$\left(\frac{(1-x)^2}{1-x^3}, \frac{x}{1+x+x^2}\right)^{-1}.$$
These begin, respectively,
$$\left(
\begin{array}{cccccc}
 1 & 0 & 0 & 0 & 0 & 0 \\
 2 & 1 & 0 & 0 & 0 & 0 \\
 4 & 3 & 1 & 0 & 0 & 0 \\
 10 & 8 & 4 & 1 & 0 & 0 \\
 26 & 22 & 13 & 5 & 1 & 0 \\
 70 & 61 & 40 & 19 & 6 & 1 \\
\end{array}
\right), \left(
\begin{array}{cccccc}
 1 & 0 & 0 & 0 & 0 & 0 \\
 2 & 1 & 0 & 0 & 0 & 0 \\
 5 & 3 & 1 & 0 & 0 & 0 \\
 13 & 9 & 4 & 1 & 0 & 0 \\
 35 & 26 & 14 & 5 & 1 & 0 \\
 96 & 75 & 45 & 20 & 6 & 1 \\
\end{array}
\right).$$
The production matrix of the ``embedded'' triangle begins
$$\left(
\begin{array}{cccccccc}
 2 & 1 & 0 & 0 & 0 & 0 & 0 & 0 \\
 0 & 0 & 1 & 0 & 0 & 0 & 0 & 0 \\
 2 & 1 & 1 & 1 & 0 & 0 & 0 & 0 \\
 0 & 0 & 0 & 0 & 1 & 0 & 0 & 0 \\
 2 & 1 & 1 & 1 & 1 & 1 & 0 & 0 \\
 0 & 0 & 0 & 0 & 0 & 0 & 1 & 0 \\
 2 & 1 & 1 & 1 & 1 & 1 & 1 & 1 \\
 0 & 0 & 0 & 0 & 0 & 0 & 0 & 0 \\
\end{array}
\right).$$
The elements of the inverse of the ``embedded triangle'' are then the coefficients of the characteristic polynomials of the principal minor matrices of this production array. Thus this production matrix provides a direct link to the sequence $1,1,2,6,33,286,\ldots$.

\section{The sequence  $1,3,26,646,\ldots$ as a Hankel transform}
The sequence \seqnum{A005156}, which begins
$$1, 1, 3, 26, 646, 45885, 9304650, 5382618660, 8878734657276,\ldots,$$
counts the number of  alternating sign $2n+1 \times 2n+1$ matrices that are symmetric about the vertical axis.

In this section we present a table of sequences whose revert transforms have the sequence $1,3,26,646,\ldots$ as their Hankel transform.
\begin{center}
\begin{tabular} {|c|c|c|c|c|}\hline
OEIS & GF  & revert transform & OEIS & Hankel \\ \hline
$(-1)^n$\seqnum{A130713} & $(1-x)^2$ & $1,2,7,30,143$ & \seqnum{A006013} & $1,3,26,646,\ldots$ \\ \hline
\seqnum{A130713} & $(1+x)^2$ & $1,-2,7,-30,143$ & $(-1)^n$\seqnum{A006013} &$1,3,26,646,\ldots$ \\ \hline
$(-1)^n$\seqnum{A000217}$_{n+1}$ & $\frac{1}{(1+x)^3}$ & $1,3,12,55,273$ & \seqnum{A001764}$_{n+1}$ & $1,3,26,646,\ldots$ \\ \hline
$(-1)^n$\seqnum{A200715}$_{n+3}$ & $\frac{1}{1+x+3x^2+x^3}$ & $1,1,4,11,41,146$ & \seqnum{A030981} & $1,3,26,646,\ldots$ \\ \hline
\seqnum{A127896} & $\frac{1}{1+2x+3x^2+x^3}$ & $1, 2, 7, 27, 114, 507$ & \seqnum{A127897} & $1,3,26,646,\ldots$ \\ \hline
$1,-1,-2,-2,-1$ & $\frac{(1-2x)^2}{(1-x)^3}$ & $ 1, 1, 4, 17, 81, 412, $ & \seqnum{A121545} & $1,3,26,646,\ldots$ \\ \hline
 $1,-4,13,-38,104$ & $\frac{(1+x)^2}{(1+2x)^3}$ & $1, 4, 19, 98, 531, 2974$ & \seqnum{A047099} & $1,3,26,646,\ldots$ \\ \hline
\seqnum{A215404} & $\frac{1}{1-4x+3x^2+x^3}$ & $1,-4,19,-99,546$ &  & $1,3,26,646,\ldots$ \\ \hline
$1,0,-3,-1,9,6$ & $\frac{1}{1+3x^2+x^3}$ & $1,0,3,1,18,15$ & \seqnum{A120984} & $1,3,26,646,\ldots$ \\ \hline
\seqnum{A339850} & $\frac{(1+x)^2}{1-2x-4x^2-2x^3}$ & $1,-4,19,-104,631$ & & $1,3,26,646,\ldots$ \\ \hline
\seqnum{A077954}$_{n+1}$ & $\frac{(1-x)^2}{1-x+2x^2-x^3}$ & $1,1,14,58,252$ & & $1,3,26,646,\ldots$ \\ \hline
\end{tabular}
\end{center}
We note that the Hankel transforms of the log  revert transform of both $\frac{1}{1+3x^2+x^3}$ and $\frac{1}{1-4x+3x^2+x^3}$ are given by $2^n[1,2,11,170,\ldots]$.

In order to find a symmetric matrix whose principal minor sequence is $1,3,26,\ldots$, we start with the generating function $f(x)=\frac{1}{(1+x)^3}$, and we form the matrix whose bivariate generating function is given by
$$\frac{f(x)}{1-x} \frac{x-y}{xf(x)-yf(y)} \frac{f(y)}{1-y}=\frac{1}{(1-y)(1-y)(1+(y-3)xy+x^2y)}.$$ This matrix begins
$$\left(
\begin{array}{ccccccc}
 1 & 1 & 1 & 1 & 1 & 1 & 1 \\
 1 & 4 & 5 & 5 & 5 & 5 & 5 \\
 1 & 5 & 15 & 21 & 22 & 22 & 22 \\
 1 & 5 & 21 & 56 & 84 & 93 & 94 \\
 1 & 5 & 22 & 84 & 211 & 331 & 386 \\
 1 & 5 & 22 & 93 & 331 & 802 & 1298 \\
 1 & 5 & 22 & 94 & 386 & 1298 & 3069 \\
\end{array}
\right).$$ By construction, the principal minor sequence of this matrix is given by $1,3,26,646,\ldots$.
In this case, the ``embedded triangle'' is not a Riordan array.

We can also parameterize our approach as in the next example.
\begin{example}
We consider the generating function $\frac{1}{1+3tx+3x^2+x^3}$, a parameterised version (in the linear part) of $\frac{1}{(1+x)^3}$. The revert transform begins
$$1, 3t, 3(3t^2 + 1), 27t^3 + 27t + 1, 3(27t^4 + 54t^2 + 4t + 6),\ldots.$$ This polynomial sequence has a coefficient array that begins
$$\left(
\begin{array}{cccccc}
 1 & 0 & 0 & 0 & 0 & 0 \\
 0 & 3 & 0 & 0 & 0 & 0 \\
 3 & 0 & 9 & 0 & 0 & 0 \\
 1 & 27 & 0 & 27 & 0 & 0 \\
 18 & 12 & 162 & 0 & 81 & 0 \\
 15 & 270 & 90 & 810 & 0 & 243 \\
\end{array}
\right).$$ This is \seqnum{A120981}, which counts the number of ternary trees with $n$ edges and having $k$ vertices of out-degree $1$ (E. Deutsch). Dividing the columns by $3^n$, we get the matrix that begins
$$\left(
\begin{array}{cccccc}
 1 & 0 & 0 & 0 & 0 & 0 \\
 0 & 1 & 0 & 0 & 0 & 0 \\
 3 & 0 & 1 & 0 & 0 & 0 \\
 1 & 9 & 0 & 1 & 0 & 0 \\
 18 & 4 & 18 & 0 & 1 & 0 \\
 15 & 90 & 10 & 30 & 0 & 1 \\
\end{array}
\right).$$ This is the exponential Riordan array $[g_e(x), x]$ where $g(x)$ is the exponential generating function of $1,0,3,1,18,15,\ldots$ or \seqnum{A120984}. This last sequence is the revert transform of the sequence with generating function $\frac{1}{1+3x^2+x^3}$ corresponding to $t=0$. This sequence, and the row sums of the matrix (\seqnum{A030981}), will have Hankel transforms $1,3,26,646,\ldots$. Indeed, all sequences with generating function $g_e(x)e^{rx}$ will have this property. This matrix is the coefficient matrix of the polynomials in $r$ defined by
$$ \frac{1}{n+1} [x^n] (1+r x + 3 x^2+ x^3)^{n+1}.$$ It is the inversion of the Riordan array
$$\left(\frac{1}{1+3x^2+3x^3}, \frac{-x}{1+3x^2+3x^3}\right).$$
\end{example}
\begin{example} The sequence \seqnum{A098746} has generating function
$$1+\frac{x t(x)}{1-x t(x)},$$  where $t(x)$ is the generating function of the ternary numbers. The Hankel transform of \seqnum{A098746} is given by
$$1, 1, 3, 26, 646, 45885, 9304650, 5382618660, 8878734657276, 41748486581283118, \ldots.$$
The sequence \seqnum{A098746} is the initial column in the Riordan array
$$\left(1-x+x^2, x(1-x)^2\right)^{-1}.$$
The row sums of this Riordan array are given by the shifted sequence \seqnum{A098746}$(n+1)$, and have a Hankel transform which begins
$$1, 2, 11, 170, 7429, 920460,\ldots.$$

The row sums of the Riordan array
$$\left(\frac{1-x}{(1+x)^2}, \frac{x}{(1+x)^3}\right)^{-1},$$ which begin
$$1, 4, 20, 108, 608, 3516, 20724, 123920, 749408, 4573788, 28127996,\ldots,$$
have a Hankel transform which begins
$$4^n[1, 1, 3, 26, 646, 45885,\ldots].$$
\end{example}
\begin{example} We consider the Riordan array of Bell type $((1+x)^2, x(1+x)^2)$. The initial column of the inverse matrix $((1+x)^2, x(1+x)^2)^{-1}$ is the revert transform of the expansion of $(1+x)^2$. The row sums of the inverse matrix is the invert transform of the initial column, and hence has the same Hankel transform, namely $1,3,26,646,\ldots$. The row sums of $((1+x)^2, x(1+x)^2)$ are given by \seqnum{A002478}$(n+1)$, where \seqnum{A002478} counts the number of ways to tile a $3 \times n$3 region with $1 \times 1$, $2 \times 2$, and $3 \times 3$ tiles. The inversion of the Riordan array $((1+x)^2, x(1+x)^2)$ is given by the exponential Riordan array $[g_e(x),-x]$ which begins
$$\left(
\begin{array}{cccccc}
 1 & 0 & 0 & 0 & 0 & 0 \\
 -2 & -1 & 0 & 0 & 0 & 0 \\
 7 & 4 & 1 & 0 & 0 & 0 \\
 -30 & -21 & -6 & -1 & 0 & 0 \\
 143 & 120 & 42 & 8 & 1 & 0 \\
 -728 & -715 & -300 & -70 & -10 & -1 \\
\end{array}
\right),$$ where $g_e(x)$ is the exponential generating function of $1,-2,7,-30,\ldots$, or $(-1)^n \frac{1}{n+1} \binom{3n+1}{n+1}$. All sequences with generating function $[g_e(x), x]\cdot e^{rx}$ will then yield sequences whose Hankel transform is given by $1,3,26,646,\ldots$.
\end{example}
\begin{example} We consider the family of polynomials given by
$$\frac{1}{n+1} [x^n] \frac{1}{(1+rx+x^2)^{n+1}},$$ which begin
$$1, -r, 2r^2-1, 5r(1-r^2), 14r^4-21r^2+3,\ldots.$$
These have a coefficient array with generating function
$$\frac{2 \sqrt{r^2-3}\sin\left(\frac{1}{3}\arcsin\left(\frac{r(2r^2-9)-27x}{2(r^2-3)^{\frac{3}{2}}}\right)\right)}{3x}-\frac{r}{3x}.$$
This array begins
$$\left(
\begin{array}{ccccccc}
 1 & 0 & 0 & 0 & 0 & 0 & 0 \\
 0 & -1 & 0 & 0 & 0 & 0 & 0 \\
 -1 & 0 & 2 & 0 & 0 & 0 & 0 \\
 0 & 5 & 0 & -5 & 0 & 0 & 0 \\
 3 & 0 & -21 & 0 & 14 & 0 & 0 \\
 0 & -28 & 0 & 84 & 0 & -42 & 0 \\
 -12 & 0 & 180 & 0 & -330 & 0 & 132 \\
\end{array}
\right).$$
For $r= \pm 2$ we get $(\mp 1)^n \frac{1}{n+1}\binom{3n+1}{n}$, both with the Hankel transform $1,3,26,646,\ldots$.
For $r=0$, we get the sequence  $1,0,-1,0,3,0,-12,\ldots$ with Hankel transform $$1,-1,-2,6,33,-286,\ldots.$$
\end{example}
\begin{example} The principal minor sequence of the symmetrization of the Riordan array
$$\left(\frac{1+2x}{1+x+x^2}, \frac{x}{1+x}\right)$$ is given by
$$1,0,-1^2,0,3^2,0,-26^2,0,646^2,0,\ldots.$$
\end{example}

\section{The sequence $1,2,11,170,7429,\ldots$ as a Hankel transform}
The sequence \seqnum{A051255} which begins
$$1, 1, 2, 11, 170, 7429, 920460, 323801820, 323674802088,\ldots,$$
counts the number of cyclically symmetric transpose complement plane partitions in a $2n \times 2n \times 2n$ box.
The sequence $1,2,11,170,7429,\ldots$ is given by, for instance, the Hankel transform of the ternary $\frac{1}{2n+1}\binom{3n}{n}$ \seqnum{A007614}. The ternary numbers are the revert transform of the sequence $1,-1,-1,-2,-5,-14,\ldots$ with generating sequence
$$\frac{1}{c(x)}=1-x c(x).$$
This generating function is not a rational function, as has been the case with the sequences examined heretofore. It turns out that its binomial and invert transforms which would also lead, through reversion, to the same Hankel transform, have been little studied. Our table for $1,2,11,170,\ldots$ linking to entries in the OEIS is therefore short.
\begin{center}
\begin{tabular}{|c|c|c|}\hline
sequence & revert transform & Hankel transform\\ \hline
\seqnum{A115140} & \seqnum{A007614} & $1,2,11,170, 7429,\ldots$ \\ \hline
$1,-3,7,-18,43,-109$ & \seqnum{A186185}$_{n+1}$ & $1,2,11,170,7249,\ldots$ \\ \hline
$1,-2,2,-5,2,-18$ & \seqnum{A188687} & $1,2,11,170,7249,\ldots$ \\ \hline
$1,-5,23,-102,443$ & \seqnum{A305573} & $1,2,11,170,7249,\ldots$ \\ \hline
$1,-2,0,2,4,2,-12$ &\seqnum{A047098} & $2^n[1,2,11,170,7249,\ldots]$ \\ \hline
$1,-3,3,6,-9,-42$ & \seqnum{A005809} & $3^n[1,2,11,170,7429,\ldots]$ \\ \hline
\seqnum{A099325} & $1,-3,11,-46,211,-1035$ & $1,2,11,170,7249,\ldots$ \\ \hline
$1,,-2,2,-3,2,-5$ & \seqnum{A098746}$_{n+1}$ & $1,2,11,170,7249,\ldots$ \\ \hline
 & \seqnum{A007226} & $2,11,170,7249,\ldots$ \\ \hline
\end{tabular}
\end{center}
\begin{example} We follow the example of the last section. Thus we consider the log revert transform
$[x^n](1+tx+3x+x^3)$ of $\frac{1}{1+tx+3x^2+x^3}$. This produces the polynomial sequence that begins
$$1, t, t^2 + 6, t^3 + 18t + 3, t^4 + 36t^2 + 12t + 54,\ldots,$$ with coefficient array that begins
$$\left(
\begin{array}{ccccccc}
 1 & 0 & 0 & 0 & 0 & 0 & 0 \\
 0 & 1 & 0 & 0 & 0 & 0 & 0 \\
 6 & 0 & 1 & 0 & 0 & 0 & 0 \\
 3 & 18 & 0 & 1 & 0 & 0 & 0 \\
 54 & 12 & 36 & 0 & 1 & 0 & 0 \\
 60 & 270 & 30 & 60 & 0 & 1 & 0 \\
 555 & 360 & 810 & 60 & 90 & 0 & 1 \\
\end{array}
\right).$$
This is the exponential Riordan array $[g_e(x), x]$ where $g_e(x)$ is the exponential generating function of the sequence $1,0,6,3,54,60,\ldots$ which corresponds to $t=0$. For all values of $t$, the polynomial sequence has a Hankel transform given by $3^n[1,2,11,170,\ldots]$.
\end{example}

\section{Fibonacci numbers, Catalan numbers, and the Robbins numbers}
In this section, we propose a conjecture concerning the Fibonacci numbers, the Catalan numbers, and the Robbins numbers. We recall that
$$c(x)= \frac{1-\sqrt{1-4x}}{2x}$$ is the generating function of the Catalan numbers $C_n=\frac{1}{n+1}\binom{2n}{n}$ \seqnum{A000108}. The generating function $c(-x^2)$, which expands to give the aeration of the alternating sign Catalan numbers
$$1,0, -1,0,2,0,-5,0,14,0,-42,0,\ldots$$ has the continued fraction form
$$c(-x^2)=\frac{1}{1+\cfrac{x^2}{1+\cfrac{x^2}{1+\cfrac{x^2}{1+\cdots}}}}.$$
Using the properties of the binomial transform \cite{CFT}, the generating function of the inverse binomial transform of this generating function will have the form
$$\frac{1}{1+x}c\left(-\left(\frac{x}{1+x}\right)^2\right)=\cfrac{1}{1+x+\cfrac{x^2}{1+x+\cfrac{x^2}{1+x+\cfrac{x^2}{1+x+\cdots}}}}.$$ This expands to give the revert transform of the Fibonacci numbers $F_{n+1}$ \seqnum{A007440} which begins
$$1,-1,0,2,-3,-1,11,-15,-13,77,\ldots.$$
This then has generating function
$$F(x)=\frac{\sqrt{1+2x+5x^2}-x-1}{2x^2}.$$
We now form the generating function
\begin{align*}\tilde{A}(x)&=\cfrac{1}{1-\cfrac{x^2}{1+x^2 F(x)}}\\
&=\frac{1-x \sqrt{1+2x+5x^2}}{1-x-2x^2+\sqrt{1+2x+5x^2}}\\
&=\frac{2+3x-x^2+x \sqrt{1+2x+5x^2}}{2(1+2x-x^2-x^3)}.\end{align*}
Alternatively, we may write this as the continued fraction
$$\tilde{A}(x)=\cfrac{x}{1-\cfrac{x^2}{1+\cfrac{x^2}{1+x+\cfrac{x^2}{1+x+\cfrac{x^2}{1+x+\cdots}}}}}.$$
The revert transform of $\tilde{A}(x)$ then expands to give the sequence that begins
$$1,0,-1,0,3,-1,-12,11,51,-89,-204,628,646,\ldots.$$
The Hankel transform of this sequence begins
$$1,-1,-2,7,42,-429,-7436,218348,10850216, -911835460,\ldots.$$
We thus arrive at the following conjecture.
\begin{conjecture} The Robbins numbers
$$1,1,2,7,42,429, 7436,\ldots$$ are given by the Hankel transform of the revert transform
$$1,0,1,0,3,-i,12,-11i,51,-89i,204,-628i,646,\ldots$$
of the sequence whose generating function is given by
$$\tilde{A}(ix)=\cfrac{1}{1+\cfrac{x^2}{1-\cfrac{x^2}{1+ix-\cfrac{x^2}{1+ix-\cfrac{x^2}{1+ix-\cdots}}}}}.$$
\end{conjecture}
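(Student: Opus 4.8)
The plan is to reduce the conjecture to the statement, displayed just above it, that the revert transform of $\tilde{A}(x)$ has Hankel transform $1,-1,-2,7,42,-429,\ldots$, by tracking how the substitution $x\mapsto ix$ acts on both the revert transform and the resulting Hankel determinants. First I would prove a substitution lemma: if $G(x)$ has revert transform $g(x)=\frac{1}{x}\Rev(xG(x))$ with coefficient sequence $g_n$, then $G(ix)$ has revert transform $g(ix)$, i.e.\ coefficient sequence $i^n g_n$. Writing $f(x)=xG(x)$, one has $xG(ix)=-i f(ix)$, and solving $-i f(iu)=x$ via the substitution $v=iu$ gives $\Rev(xG(ix))(x)=-i\,\bar f(ix)$; since $\bar f(x)=x g(x)$ this simplifies to $\frac{1}{x}\Rev(xG(ix))=g(ix)$. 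Applying this with $G=\tilde{A}$ and the stated expansion $1,0,-1,0,3,-1,-12,11,51,-89,-204,628,646,\ldots$ of $\rev(\tilde A)$, the sequence $i^n g_n$ is exactly $1,0,1,0,3,-i,12,-11i,51,-89i,204,-628i,646,\ldots$, which matches the conjecture and confirms that its sequence is indeed $\rev(\tilde A(ix))$.

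Next I would invoke the elementary scaling rule for Hankel determinants: for any constant $c$, the sequence $c^n a_n$ has Hankel transform $c^{n(n+1)}h_n$, obtained by factoring $c^i$ from row $i$ and $c^j$ from column $j$ of the $(n+1)\times(n+1)$ Hankel matrix (the case $c=-1$ is the sign-invariance already recorded in Section~2). Taking $c=i$ and $a_n=g_n$, the Hankel transform of the conjecture's sequence equals $i^{n(n+1)}H_n$, where $H_n$ is the Hankel transform of $\rev(\tilde A)$. Because $n(n+1)$ is even we have $i^{n(n+1)}=(-1)^{\binom{n+1}{2}}$, while the displayed signed sequence satisfies $H_n=(-1)^{\binom{n+1}{2}}A_{n+1}$; the two sign factors cancel and leave precisely the Robbins numbers $A_{n+1}=1,1,2,7,42,429,\ldots$. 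This establishes the conjecture modulo the signed statement, and in fact shows the two are equivalent.

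The real obstacle, then, is the unconditional input: a rigorous proof that $\rev(\tilde A(x))$ has Hankel transform $(-1)^{\binom{n+1}{2}}A_{n+1}$. The natural route is to find a Jacobi continued fraction $\cfrac{1}{1-\alpha_0 x-\cfrac{\beta_1 x^2}{1-\alpha_1 x-\cdots}}$ for $\rev(\tilde A)$ and apply the Heilermann formula $h_n=\beta_1^n\beta_2^{n-1}\cdots\beta_n$, showing the product collapses to the Robbins numbers. The difficulty is that $\tilde A$ is built from the Fibonacci reversion $F(x)$ and is algebraic of high degree, so its continued fraction does not terminate; the finite-depth \emph{same Hankel transform} proposition of Section~2 does not apply, and one must instead pin down the entire sequence of $\beta_k$. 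The most promising tactic is to connect $\rev(\tilde A)$, through the Hankel-preserving binomial and invert transforms, to a continued-fraction representative already known to yield the Robbins numbers (the Gessel form $f_{1,0}(x)=1+\sqrt3\,x+x^2$, equivalently $1+3x+3x^2$); proving that $\tilde A$ lies in the same Hankel-equivalence class as one of these is where essentially all of the work resides.
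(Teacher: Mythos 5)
Your reduction is correct as far as it goes, and it supplies bookkeeping that the paper leaves implicit. The substitution lemma is right: with $h(x)=xG(ix)=-if(ix)$, solving $f(iu)=ix$ gives $u=-i\bar{f}(ix)=xg(ix)$, so the revert transform of $G(ix)$ is indeed $g(ix)$, and the twisted sequence $i^n g_n$ matches the conjecture's displayed terms. The row/column scaling argument giving the factor $i^{n(n+1)}=(-1)^{\binom{n+1}{2}}$ on Hankel determinants is also the standard one. There is a small indexing slip: the signed sequence displayed in the paper satisfies $H_n=(-1)^{\binom{n+1}{2}}A_n$, not $(-1)^{\binom{n+1}{2}}A_{n+1}$ (e.g.\ $H_1=-1=-A_1$, whereas $A_2=2$); the sequence values you quote are the correct ones, so the cancellation $i^{n(n+1)}H_n=A_n$ goes through unchanged once the index is fixed.

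However, the proposal does not prove the statement, and you acknowledge this yourself: everything rests on the unconditional claim that the Hankel transform of $\rev(\tilde{A}(x))$ equals $(-1)^{\binom{n+1}{2}}A_n$. That claim is exactly what the paper does not have --- the displayed sequence $1,-1,-2,7,42,-429,\ldots$ is a numerical observation, which is precisely why the statement is labelled a conjecture rather than a proposition. What you have established is therefore an equivalence between the conjecture and that signed observation (a genuine clarification, since the paper merely juxtaposes the two without spelling out the $x\mapsto ix$ sign bookkeeping), not a proof of either. Your proposed route for the remaining step --- extract a J-fraction for $\rev(\tilde{A})$ and apply the Heilermann formula, or place $\tilde{A}$ in the Hankel-equivalence class of the Gessel--Xin generating function $1+\sqrt{3}x+x^2$ via binomial and invert transforms --- is plausible but untouched, and as you correctly note, the finite-depth proposition of Section~2 does not apply because the continued fraction for $\tilde{A}$, built from the Fibonacci reversion $F(x)$, does not terminate. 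Since the paper offers no proof to compare against, the verdict is that your attempt is a correct conditional reduction of an open conjecture, not a resolution of it.
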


\section{Riordan arrays and the Robbins numbers}
We now wish to start with the generating function $\tilde{A}(x)$ of the last section to arrive at a Riordan array whose symmetrization yields the sequence $1,-1,-2,7,42,\ldots$. A  modification in the symmetrization process will then yield a matrix whose principal minor sequence gives the Robbins numbers (subject to the conjecture of the last section being valid).
For this, we let
$$g(x)=\frac{x}{1-x-x^2},$$ and
$$\gamma(x)=\frac{1-x-x^2}{1-2x-x^2}.$$
We use these generating functions to transform the generating function of the Hankel matrix for $\tilde{A}(x)$ in such a way that the principal minor sequences of the resulting matrix will yield the Hankel transform. Thus we calculate
$$\frac{1}{\gamma(x)(1-x)}\frac{1}{\gamma(y)(1-y)}\frac{\tilde{A}(g(x))}{1-g(x)} \frac{\tilde{A}(g(y)}{1-g(y)}\frac{g(x)-g(y)}{g(x) \tilde{A}(g(x))- g(y)\tilde{A}(g(x))}.$$
This results in the following generating function.
$$\frac{1+xy}{1-x-y+3xy-xy^2-x^2y+x^2y^2}.$$
The homogeneous form of this generating function ensures that it is the bivariate generating function of a symmetric matrix, whose principal minor sequence will, by construction, be $1,-1,-2,7,42,\ldots$.
In fact, this matrix, which begins
$$\left(
\begin{array}{ccccccc}
 1 & 1 & 1 & 1 & 1 & 1 & 1 \\
 1 & 0 & -1 & -2 & -3 & -4 & -5 \\
 1 & -1 & -1 & 0 & 2 & 5 & 9 \\
 1 & -2 & 0 & 1 & 1 & -1 & -6 \\
 1 & -3 & 2 & 1 & 0 & -1 & 0 \\
 1 & -4 & 5 & -1 & -1 & -1 & 0 \\
 1 & -5 & 9 & -6 & 0 & 0 & 1 \\
\end{array}
\right),$$
is the symmetrization of the Riordan array $\left(\frac{1+x}{1+x+x^2}, \frac{x}{1+x}\right)$.
We now modify the symmetrization process to produce a single \emph{amalgamated} matrix from two Riordan arrays.
Thus given two Riordan arrays $A=(a_{n,k})$ and $B=(b_{n,k})$  we form the matrix $A \amalg B=(t_{n,k})$ where
$$t_{n,k}=\begin{cases} a_{n,n-k}\quad \text{if\,}k \le n,\\
                        b_{k,k-n} \quad \text{otherwise}.
           \end{cases}$$
Using this notation, the symmetrization of the Riordan array $A$ is just $A  \amalg A$.
We then have the following result.
\begin{conjecture} The principal minor sequence of the amalgamation
$$\left(\frac{1-x}{1-x+x^2}, \frac{x}{1-x}\right) \amalg \left(\frac{1-x}{1-x+x^2}, \frac{-x}{1-x}\right)$$
is the sequence of Robbins numbers.
\end{conjecture}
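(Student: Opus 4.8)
The plan is to reduce the statement to a determinant evaluation already observed (but not proved) in Example \ref{Ex}, and then to attack that evaluation with the revert--transform machinery of Section 2. Write $A=\left(\frac{1-x}{1-x+x^2},\frac{x}{1-x}\right)$ and $B=\left(\frac{1-x}{1-x+x^2},\frac{-x}{1-x}\right)$, with entries $a_{n,k}$ and $b_{n,k}$, and let $C=\left(\frac{1+x}{1+x+x^2},\frac{x}{1+x}\right)$ be the array whose symmetrization $C\amalg C$ is computed in Example \ref{Ex}, with entries $s_{n,k}$ and bivariate generating function $\frac{1+xy}{(1-x+xy)(1-y+xy)}$. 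First I would record two elementary identities. Since $C=(g_A(-x),-f_A(-x))$ for $A=(g_A,f_A)$, the standard sign rule for Riordan arrays gives $a_{n,k}=(-1)^{n-k}c_{n,k}$; and since $B=(g_A,-f_A)$ negates only the second component, $b_{n,k}=(-1)^k a_{n,k}=(-1)^n c_{n,k}$.

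Substituting these into the definition of the amalgamation makes a factor $(-1)^k$ come out of every column. For $k\le n$ one has $t_{n,k}=a_{n,n-k}=(-1)^k c_{n,n-k}=(-1)^k s_{n,k}$, while for $k>n$ one has $t_{n,k}=b_{k,k-n}=(-1)^k c_{k,k-n}=(-1)^k s_{n,k}$. Hence
$$A\amalg B=(C\amalg C)\cdot\operatorname{diag}\big((-1)^k\big),$$
so that the leading $(m+1)\times(m+1)$ principal minors satisfy
$$\big|(A\amalg B)_m\big|=\Big(\prod_{k=0}^m(-1)^k\Big)\,\big|(C\amalg C)_m\big|=(-1)^{\binom{m+1}{2}}\,\big|(C\amalg C)_m\big|.$$
Thus the proposition is equivalent to the claim that the symmetrization $C\amalg C$ has principal minors $\big|(C\amalg C)_m\big|=(-1)^{\binom{m+1}{2}}A_m$, namely the sequence $1,-1,-2,7,42,-429,\dots$ recorded in Example \ref{Ex}; the sign $(-1)^{\binom{m+1}{2}}$ then cancels the one above and returns the unsigned Robbins numbers $A_m$. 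The first two paragraphs are purely formal, and this numerical reduction can be checked against the tabulated entries of both matrices.

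It remains to prove the determinant evaluation for $C\amalg C$, and here I would use the revert--transform result of Section 2 (Hankel transform of the revert transform of $G$ equals the principal minor sequence of $\frac{x-y}{xG(x)-yG(y)}$). Setting $y=0$ in the identity one wants forces the conjugating factor to be $\frac{G(x)}{1-x}$, so I would seek a series $G$ with
$$\frac{G(x)}{1-x}\,\frac{G(y)}{1-y}\,\frac{x-y}{xG(x)-yG(y)}=\frac{1+xy}{(1-x+xy)(1-y+xy)}.$$
Because the outer factors are lower--triangular Riordan arrays with unit diagonal, such a $G$ makes $\big|(C\amalg C)_m\big|$ equal to the Hankel transform of the revert transform of $G$. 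I would then expand $G$ (equivalently $\frac{1}{x}\Rev(xG)$) as a Jacobi continued fraction, evaluate the determinant by the Heilermann formula, and collapse the $\beta$--parameters to a closed form by an equivalent continued fraction / inverse binomial transform exactly as in the centered--triangle computation that takes $\frac{1+x+x^2}{(1-x)^3}$ to $1+3x+3x^2$, matching the product $A_m=\prod_{j=0}^{m-1}\frac{(3j+1)!}{(m+j)!}$.

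The hard part is this final evaluation. The numerator $1+xy$ is exactly the difference between $C\amalg C$ and the matrix $\frac{1}{(1-x+xy)(1-y+xy)}$ of the last example of Section 1, whose principal minors are the already--justified $(-1)^{\binom{m+1}{2}}A_{m+1}$ (Section 3, via Gessel--Xin); passing to $C\amalg C$ shifts the Robbins index from $A_{m+1}$ down to $A_m$, inserting the extra leading $1$. A possibly more economical route is therefore to exploit $s_{n,k}=N_{n,k}+N_{n-1,k-1}$ (from the factor $1+xy$, with $N$ the proven matrix) together with the fact that both matrices have first row and column identically $1$: a Schur--complement reduction against the $(0,0)$ corner expresses $\big|(C\amalg C)_m\big|$ through the core $\big(N_{n,k}-1\big)$ of $N$, so the whole problem reduces to the bookkeeping of that index shift. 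Either way, establishing the passage $A_{m+1}\to A_m$ is precisely the content of the Section 10 conjecture and is the one genuinely nontrivial step; an unconditional proof requires carrying out the continued--fraction evaluation (or the Schur--complement identity) directly rather than invoking that conjecture.
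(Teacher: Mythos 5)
Your reduction---writing $A\amalg B=(C\amalg C)\cdot\operatorname{diag}\bigl((-1)^k\bigr)$ for $C=\left(\frac{1+x}{1+x+x^2},\frac{x}{1+x}\right)$, so that the principal minors acquire the factor $(-1)^{\binom{m+1}{2}}$ and the statement becomes the signed evaluation $\bigl|(C\amalg C)_m\bigr|=(-1)^{\binom{m+1}{2}}A_m$ for the symmetrization of Example \ref{Ex}---is exactly the paper's own justification, which consists of the single observation that multiplying the columns of an $n\times n$ matrix by $(-1)^k$ multiplies the determinant by $(-1)^{\binom{n}{2}}$. Like the paper, you correctly identify that the remaining evaluation is not proved but is precisely the open conjecture of Section 10, so your additional continued-fraction/Schur-complement sketch, while reasonable, does not (and is not claimed to) upgrade the statement beyond its conjectural status.
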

This results from the fact that multiplying the elements of an $n \times n$ matrix by $(-1)^k$ produces a factor of $(-1)^{\binom{n}{2}}$ for the determinant. The new matrix begins
$$\left(
\begin{array}{cccccccc}
 1 & -1 & 1 & -1 & 1 & -1 & 1 & -1 \\
 1 & 0 & -1 & 2 & -3 & 4 & -5 & 6 \\
 1 & 1 & -1 & 0 & 2 & -5 & 9 & -14 \\
 1 & 2 & 0 & -1 & 1 & 1 & -6 & 15 \\
 1 & 3 & 2 & -1 & 0 & 1 & 0 & -6 \\
 1 & 4 & 5 & 1 & -1 & 1 & 0 & 0 \\
 1 & 5 & 9 & 6 & 0 & 0 & 1 & -1 \\
 1 & 6 & 14 & 15 & 6 & 0 & 1 & 0 \\
\end{array}
\right).$$
Multiplying this matrix on the right by the transpose of the Riordan array $\left(\frac{1+x}{1-x}, x\right)$ leads us to the matrix that begins
$$\left(
\begin{array}{ccccccc}
 1 & 1 & 1 & 1 & 1 & 1 & 1 \\
 1 & 2 & 1 & 2 & 1 & 2 & 1 \\
 1 & 3 & 3 & 2 & 4 & 1 & 5 \\
 1 & 4 & 6 & 5 & 5 & 7 & 2 \\
 1 & 5 & 10 & 11 & 10 & 11 & 12 \\
 1 & 6 & 15 & 21 & 21 & 21 & 22 \\
 1 & 7 & 21 & 36 & 42 & 42 & 43 \\
\end{array}
\right).$$
This matrix has been encountered previously in Example \ref{Ex}.

\section{Conclusions} This article has provided evidence that many of the determinantal formulas for sequences involving the enumeration of types of plane partitions and alternating sign matrices can be linked to two related ideas: that of the principal minor sequences of symmetric matrices, on the one hand, and that of Hankel transforms of the revert transform of simpler sequences connected with the enumeration of simpler objects (for instance, tilings and height-limited Motzkin paths). It has been found useful also to invoke the theory of Riordan arrays and continued fractions to find relationships that cast further light on the richness of this area of study.

\section{Acknowledgements}
This article makes reference to a number of sequences, many of which are to be found in the OEIS. The present author has found this encyclopedia to be an invaluable tool for this work.

The code for the image of the plane partition is that of Jang Soo Kim.

\bigskip
\hrule

\noindent 2010 {\it Mathematics Subject Classification}:
Primary 05A05; Secondary 05A15, 15A15, 15B35, 11B83,  11C20.
\noindent \emph{Keywords:} Robbins numbers, plane partition, alternating sign matrix, Riordan array, Hankel determinant.

\bigskip
\hrule
\bigskip
\noindent (Concerned with sequences
\seqnum{A000045},
\seqnum{A000108},
\seqnum{A000124},
\seqnum{A000217},
\seqnum{A000975},
\seqnum{A001045},
\seqnum{A001764},
\seqnum{A001844},
\seqnum{A002061},
\seqnum{A002478},
\seqnum{A005021},
\seqnum{A005130},
\seqnum{A005156},
\seqnum{A005157},f
\seqnum{A005329},
\seqnum{A005448},
\seqnum{A005809},
\seqnum{A005891},
\seqnum{A006013},
\seqnum{A007226},
\seqnum{A007318},
\seqnum{A007440},
\seqnum{A030981},
\seqnum{A047098},
\seqnum{A047099},
\seqnum{A047749},
\seqnum{A049126},
\seqnum{A049130},
\seqnum{A052536},
\seqnum{A052547},
\seqnum{A052941},
\seqnum{A052975},
\seqnum{A072405},
\seqnum{A077954},
\seqnum{A077998},
\seqnum{A080937},
\seqnum{A080956},
\seqnum{A088927},
\seqnum{A094706},
\seqnum{A094832},
\seqnum{A094833},
\seqnum{A098746},
\seqnum{A099325},
\seqnum{A104769},
\seqnum{A106509},
\seqnum{A115140},
\seqnum{A120981},
\seqnum{A120984},
\seqnum{A121449},
\seqnum{A121545},
\seqnum{A122100},
\seqnum{A122368},
\seqnum{A127896},
\seqnum{A127897},
\seqnum{A130713},
\seqnum{A134565},
\seqnum{A154272},
\seqnum{A186185},
\seqnum{A188022},
\seqnum{A188687},
\seqnum{A200715},
\seqnum{A215404},
\seqnum{A305573},
\seqnum{A321511}, and
\seqnum{A339850}).

\end{document}